\documentclass[11pt]{amsart}
\usepackage[margin=1in]{geometry}

\usepackage{amssymb}
\usepackage{amsthm}
\usepackage{amsmath}
\usepackage{mathrsfs}
\usepackage{amsbsy}
\usepackage{bm}
\usepackage{hyperref}
\usepackage{tikz}
\usepackage{array}
\usepackage{svg}
\usepackage{enumerate}
\usepackage{bbm}
\usepackage{comment}
\usepackage{mathtools}
\usepackage{tabu}
\usepackage{makecell} 
\usepackage{colortbl}
\usepackage{xcolor}

\DeclareFontFamily{U}{mathx}{}
\DeclareFontShape{U}{mathx}{m}{n}{<-> mathx10}{}
\DeclareSymbolFont{mathx}{U}{mathx}{m}{n}
\DeclareMathAccent{\widecheck}{0}{mathx}{"71}

\definecolor{LightBlue}{rgb}{0,0.8,1} 
\definecolor{Green}{rgb}{0,0.863,0}
\definecolor{DarkGreen}{rgb}{0,0.5,0}
\definecolor{MildGreen}{rgb}{0,0.784,0}
\definecolor{Turquoise}{rgb}{0,0.68,0.38}
\definecolor{NormalGreen}{rgb}{0,0.8,0}
\definecolor{LightGreen}{rgb}{0,0.922,0}
\definecolor{Magenta}{rgb}{0.784,0,0.784}
\definecolor{Yellow}{rgb}{0.95,0.95,0}
\definecolor{lavender}{rgb}{0.4,0,1}
\definecolor{peach}{rgb}{1,0.43,0.39} 
\definecolor{DarkPink}{rgb}{1,0,0.45} 
\definecolor{NewBlue}{rgb}{0,0.3,0.8}
\definecolor{Teal}{rgb}{0,0.784,0.784}
\definecolor{Gold}{rgb}{0.929,0.784,0.392}

\hypersetup{colorlinks=true, citecolor=LightBlue, linkcolor=NewBlue,urlcolor=NewBlue}

\usepackage{hhline}
\setlength{\parskip}{0em}
\allowdisplaybreaks
\usepackage[noadjust]{cite}

\usepackage{caption}
\usepackage[noabbrev,capitalise,nameinlink]{cleveref}
\usepackage{float}
\crefname{conjecture}{Conjecture}{Conjectures}

\newtheorem{theorem}{Theorem}[section]

\newtheorem{lemma}[theorem]{Lemma}

\theoremstyle{definition}

\usepackage{etoolbox}

\newcommand{\norm}[1]{\left\lVert #1 \right\rVert}

\newcommand{\D}{S}
\newcommand{\Var}{\mathrm{Var}} 
\newcommand{\p}{\mathfrak{p}}
\newcommand{\s}{\mathfrak{s}}

\newcommand{\dfn}[1]{\textcolor{blue}{\emph{#1}}}

\begin{document}

\title[]{The Minary Primitive of Computational Autopoiesis} 
\subjclass[2010]{}

\author[]{Daniel Connor}
\address[]{Autopoetic, New York, NY, USA}
\email{daniel@danielconnor.com} 

\author[]{Colin Defant}
\address[]{Department of Mathematics, Harvard University, Cambridge, MA 02138, USA}
\email{colindefant@gmail.com}

\begin{abstract}
We introduce \emph{Minary}, a computational framework designed as a candidate for the first formally provable autopoietic primitive. Minary represents interacting probabilistic events as multi-dimensional vectors and combines them via linear superposition rather than multiplicative scalar operations, thereby preserving uncertainty and enabling constructive and destructive interference in the range $[-1,1]$. A fixed set of ``perspectives'' evaluates ``semantic dimensions'' according to hidden competencies, and their interactions drive two discrete-time stochastic processes. We model this system as an iterated random affine map and use the theory of iterated random functions to prove that it converges in distribution to a unique stationary law; we moreover obtain an explicit closed form for the limiting expectation in terms of row, column, and global averages of the competency matrix. We then derive exact formulas for the mean and variance of the normalized consensus conditioned on the activation of a given semantic dimension, revealing how consensus depends on competency structure rather than raw input signals. Finally, we argue that Minary is organizationally closed yet operationally open in the sense of Maturana and Varela, and we discuss implications for building self-maintaining, distributed, and parallelizable computational systems that house a uniquely subjective notion of identity.
\end{abstract}

\maketitle

\section{Introduction}\label{sec:intro} 
The field of computer science has faced long-standing challenges in representing distributed probabilistic systems. The textbook answer to reconciling interacting probabilistic events has been to reach for non-linear multiplicative calculations of scalar values in the unit interval $[0,1]$. The effect of using multiplication with unit interval scalars is that the multiplied values approach zero, amplify noise, and fully collapses to $0$ in cases where any participant contributes a factor of $0$.

A potentially more robust alternative to using scalars throughout is to compute products with probability density functions, collapsing the result to a unit interval scalar at the end using methods such as computing the mean, mode, or median.

Bayes defined the core multiplicative process as Posterior $\propto$ Prior $\times$ Likelihood \cite{Bayes}.
In any case, multiplicative methods with scalar collapse are specifically intended to \textit{reduce} uncertainty.

The disclosed Minary computational framework uses a fundamentally different philosophy that is intended to \textit{preserve} uncertainty. The Minary framework combines interdependent probabilistic events, contributed by perspectives, represented end-to-end as multi-dimensional vectors. A simple linear transformation enables the interactions of these vectors to be represented with components in the range $[-1,1]$. The use of signed components enables the use of wave-like superposition to create constructive and destructive interference patterns. Unlike multiplicative methods, the superposition is linear.

The Minary computational framework consumes vectors, computes with vectors, and produces vectors. The linear property of superposition enables symmetry that provably preserves information, while vectors contribute a high-fidelity format where the information of uncertainty or belief can be represented with precision across arbitrary dimensions. Additionally, the commutative and associative properties of the superposition confer computational flexibility in terms of latency and parallelism.

The Minary computational framework is a dynamical system and cybernetic feedback mechanism \cite{Ashby} backed by learning stored in a semantic topology data structure. Think of it as a primitive for collective belief. The input signal functions as the perturbation driving emergence, but is demonstrated to be completely canceled out from the learning signals, thus creating a totally closed and self-referential learning loop. As a result of its vector and linear properties, even in a closed system, the feedback of the Minary framework does not drive the system towards collapse but instead maintains coherence through information preservation, something akin to the physical properties of Newtonian energy conservation.

Functionally closed but operationally open, Minary, we argue, is the first formal, provable model of an autopoietic computational primitive.

The Minary computational framework is usable in both stochastic and deterministic forms; this article explores the properties of the stochastic variant.  

\section{Definitions}

\subsection{Autopoiesis} The term \emph{autopoiesis} was first introduced in 1972 by biologists Humberto Maturana and Francisco Varela \cite{MaturanaVarela1972,MaturanaVarela1980}, who described living cells as self-creating machines. Building on the framework of general systems theory \cite{vonBertalanffy}, which characterized living systems as open systems maintaining themselves through continuous exchange with their environment, Maturana and Varela proposed a more specific organizational criterion. This naturally led to subsequent attempts in computer science to create artificial machines that meet their formal criteria, and attempts to model social dynamics using autopoietic methods \cite{Luhmann}. In their own words: ``An autopoietic machine is a machine organized (defined as a unity) as a network of processes of production (transformation and destruction) of components which: (i)~through their interactions and transformations continuously regenerate and realize the network of processes (relations) that produced them; and (ii)~constitute it (the machine) as a concrete unity in space in which they (the components) exist by specifying the topological domain of its realization as such a network.''

To qualify as autopoietic, a machine must continuously regenerate its own structure (be organizationally closed) while also responding to its environment (be operationally open).

\subsection{Allopoiesis} In contrast and mutually exclusive to autopoiesis, allopoietic machines are organized to produce something other than themselves (e.g., a car factory produces cars, not more factories). Their function is defined by external factors rather than self-referential maintenance. Most traditional computational processes are allopoietic. \cite{MaturanaVarelaUribe1974}

\section{Related Works}

\subsection{Bayesian Networks} First formally introduced in 1985 by Judea Pearl, Bayesian Networks form a foundational technique for probabilistic reasoning \cite{Pearl}. An applied implementation of Bayes' Theorem, these networks are composed of directed acyclic graphs that represent causal relationships between events.

In this framework, the prior probability is multiplied with the likelihood. The network's graph structure allows the joint probability distribution of all variables to be factored into a product of local conditional probabilities. The purpose of deploying a Bayesian Network is to infer the most likely state for a given set of events; thus, they are explicitly designed to collapse uncertainty and converge on a specific outcome.

Fundamentally allopoietic, a Bayesian Network's structure is organized by an external designer and updated with new evidence. Its organization is not self-producing but is instead a static map used for externally-directed inference.

\subsection{Artificial Neural Networks} Rooted in statistical learning theory and computational neuroscience, Artificial Neural Networks (ANNs) represent the work of many individuals over the past century \cite{Rosenblatt,McCullochPitts,Hebb,MinskyPapert,Hopfield} and have become the dominant paradigm for artificial intelligence (Large Language Models, in particular).

In an ANN, input vectors are fed through layers of computational units (``neurons''). Each unit applies a weighted, multiplicative sum to its inputs, which is then passed through a non-linear activation function. Training is typically guided by backpropagation \cite{RumelhartHintonWilliams}, a process that produces an error signal by comparing outputs against an external ``ground truth.'' This error signal is then used to update the model's weights, thus increasing the likelihood of the model producing output aligned with the ground truth signal in the future.

Fundamentally allopoietic, ANNs are externally directed systems whose organization is sculpted by an external objective and whose outputs are distinct from themselves.

\subsection{Vector Symbolic Architectures} A close cousin to Minary, Vector Symbolic Architectures (VSA) \cite{Kanerva} or Hyperdimensional Computing (HDC) \cite{Gayler} leverage superposition to bundle semantics as vectors in high-dimensional space where data is addressable by the encoded semantics. The resulting semantic topology may be queried in several ways, typically through distance-based similarity search (such as finding the $k$-Nearest Neighbors) to find the closest matches to a cue vector.

Fundamentally allopoietic, VSA and HDC architectures are populated by external sources of truth and do not produce themselves.

\subsection{The Autopoietic Gap} The allopoietic nature of virtually all dominant computational paradigms \cite{McMullin} leaves a gap for a novel autopoietic primitive for use in constructing self-directed systems. We position Minary as a candidate to fill that gap.

\section{Mathematical Formalism} 
In this section, we provide the rigorous mathematical definitions required to describe the stochastic processes modeling our consensus mechanism in the Minary framework. 

Let $\mathbb R^{a\times b}$ denote the set of real $a\times b$ matrices. For each positive integer $b$, let $[b]$ denote the set $\{1,2,\ldots,b\}$. 

We fix a set $\{\mathfrak p_1,\ldots,\mathfrak p_n\}$ of $n$ \dfn{perspectives} and a set $\{\mathfrak s_1,\ldots,\mathfrak s_m\}$ of $m$ \dfn{semantic dimensions}. Each perspective $\mathfrak p_i$ has a certain competency $C_{i,j}\in[0,1]$ for evaluating the semantic dimension $\mathfrak s_j$; the \dfn{competency matrix} is the matrix $C\in\mathbb R^{n\times m}$ whose entry in row $i$ and column $j$ is $C_{i,j}$. We also fix an integer $k\in[m]$, a \dfn{step size}\footnote{There is not much harm in taking $\alpha$ to be any number in $(0,1)$, but assuming it is less than $2/3$ will simplify one of our proofs. In specific examples, we will take $\alpha$ much less than $2/3$ (say around $0.02$).} $\alpha\in(0,2/3)$, and a probability measure $\mu$ on $[0,1]$. In examples, we will take $\mu$ to be the uniform measure on $[0,1]$.

At each time step $t$, we choose a $k$-element set $\D^{(t)}\subseteq[m]$ uniformly at random. The semantic dimensions $\mathfrak s_j$ for $j\in \D^{(t)}$ are the \dfn{active dimensions} at time $t$. For each $j\in \D^{(t)}$, we sample a random \dfn{signal} $x_j^{(t)}\in[0,1]$ from the probability distribution $\mu$. The signals chosen for different active dimensions are independent of each other. Each perspective $\mathfrak p_i$ then generates the \dfn{raw response} 
\begin{equation}\label{eq:r}
r_{i,j}^{(t)}=x_{j}^{(t)}-C_{i,j},
\end{equation} 
which is then adjusted using the exponential moving average to form the \dfn{adjusted response} 
\begin{equation}\label{eq:R}
R_{i,j}^{(t)}=r_{i,j}^{(t)}+\Delta_{i,j}^{(t-1)}.
\end{equation} 
At this point, the perspective $\mathfrak p_i$ has different adjusted responses for the different active dimensions. We consolidate this information into the single \dfn{average adjusted response} 
\begin{equation}\label{eq:R2}
R_{i}^{(t)}=\frac{1}{k}\sum_{j\in \D^{(t)}}R_{i,j}^{(t)}. 
\end{equation} 
This allows us to compute the consensus value 
\begin{equation}\label{eq:G}
G^{(t)}=\sum_{i=1}^nR_{i}^{(t)}. 
\end{equation} 
For all $i\in[n]$ and $j\in[m]$, we now update the exponential moving average by setting 
\begin{equation}\label{eq:Delta}
\Delta_{i,j}^{(t)}=\begin{cases} \alpha d_{i}^{(t)}+(1-\alpha)\Delta_{i,j}^{(t-1)} & \mbox{if }j\in \D^{(t)} \\  \Delta_{i,j}^{(t-1)} & \mbox{if }j\not\in \D^{(t)},  \end{cases} 
\end{equation} 
where 
\begin{equation}\label{eq:d}
d_{i}^{(t)}=\frac{1}{n}G^{(t)}-R_i^{(t)}. 
\end{equation}

In summary, there are two sources of randomness driving the processes $(\Delta^{(t)})_{t\geq 1}$ and $(G^{(t)})_{t\geq 1}$. One is the choice of the uniformly random $k$-element subset $\D^{(t)}\subseteq[m]$ at each time $t$; the other is the random signals $x_j^{(t)}$ (for $j\in \D^{(t)}$) at each time $t$. 

\section{Exponential Moving Average Limits}
Recall that we have a fixed competency matrix $C\in\mathbb R^{n\times m}$ with entries in $[0,1]$. For $i\in[n]$ and $j\in[m]$, let \[\overline C_{\cdot,j}=\frac{1}{n}\sum_{r=1}^n C_{r,j}\quad\text{and}\quad \overline C_{i,\cdot}=\frac{1}{m}\sum_{r=1}^m C_{i,r}.\] Let us also write 
\[\overline{\overline C}=\frac{1}{mn}\sum_{i=1}^n\sum_{j=1}^m C_{i,j}\] for the average of all competencies. 

Our goal in this section is to prove the following theorem regarding the exponential moving average process $(\Delta_t)_{t\geq 0}$. 

\begin{theorem}\label{thm:convergence} 
As $t\to\infty$, the random matrix $\Delta^{(t)}$ converges in distribution. Moreover, for all $i\in[n]$ and $j\in[m]$, we have 
\[\lim_{t\to\infty}\mathbb E\left[\Delta_{i,j}^{(t)}\right]=\left(\frac{1}{2}-\eta_{m,k}\right)\left(\overline C_{i,\cdot}-\overline{\overline C}\right)+\eta_{m,k}\left(C_{i,j}-\overline C_{\cdot,j}\right),\] 
where 
\[\eta_{m,k}=\frac{m-k}{k(m-1)+m-k}.\] 
\end{theorem}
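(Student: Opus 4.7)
The first step is to observe that the random signals $x_j^{(t)}$ cancel in every $d_i^{(t)}$: each $x_j^{(t)}$ enters $R_{r,j}^{(t)}$ identically in $r$, so it vanishes in $\tfrac{1}{n}G^{(t)}-R_i^{(t)}$. Substituting \eqref{eq:r}--\eqref{eq:d} yields
\[
d_i^{(t)}=\frac{1}{k}\sum_{j\in \D^{(t)}}\bigl(C_{i,j}-\overline{C}_{\cdot,j}+\overline{\Delta}_{\cdot,j}^{(t-1)}-\Delta_{i,j}^{(t-1)}\bigr),
\]
where $\overline{\Delta}_{\cdot,j}:=\tfrac{1}{n}\sum_r \Delta_{r,j}$. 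Thus \eqref{eq:Delta} has the form $\Delta^{(t)}=A^{(t)}\Delta^{(t-1)}+b^{(t)}$ of an iterated random affine map driven only by the i.i.d.\ subsets $\D^{(t)}$. Convergence in distribution is then a Diaconis--Freedman argument; the required average contraction is visible from two observations: (i)~whenever $j\in \D^{(t)}$, the column mean satisfies $\overline{\Delta}_{\cdot,j}^{(t)}=(1-\alpha)\overline{\Delta}_{\cdot,j}^{(t-1)}$, a genuine contraction, and (ii)~every column is refreshed with probability $k/m>0$ per step, so no direction remains frozen. Combining these gives $\mathbb{E}[\|A^{(t)}v\|_2^2]\le(1-c)\|v\|_2^2$ for some $c>0$ (the hypothesis $\alpha<2/3$ being convenient for controlling the off-diagonal coupling), producing a unique stationary law and, via uniform $L^2$ bounds, convergence of first moments.

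Set $M_{i,j}:=\lim_t\mathbb{E}[\Delta_{i,j}^{(t)}]$. Taking expectations in \eqref{eq:Delta} and using $\Delta^{(t-1)}\perp\D^{(t)}$ together with $\mathbb{P}(j\in\D^{(t)})=k/m$, the stationary identity collapses to $M_{i,j}=\mathbb{E}[d_i^{(t)}\mid j\in \D^{(t)}]$. Since $\mathbb{P}(j'\in\D^{(t)}\mid j\in\D^{(t)})=(k-1)/(m-1)$ for $j'\ne j$, this becomes the linear system
\[
M_{i,j}=\frac{m-k}{k(m-1)}\bigl(C_{i,j}-\overline{C}_{\cdot,j}+\overline{M}_{\cdot,j}-M_{i,j}\bigr)+\frac{k-1}{k(m-1)}\sum_{j'=1}^m\bigl(C_{i,j'}-\overline{C}_{\cdot,j'}+\overline{M}_{\cdot,j'}-M_{i,j'}\bigr).
\]

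Solving this is symmetric linear algebra. Averaging over $i$ kills every term on the right (each is an $i$-mean-zero quantity), so $\overline{M}_{\cdot,j}=0$ for all $j$; in particular $\overline{\overline{M}}=0$. Averaging what remains over $j$ and using $\sum_j(C_{i,j}-\overline{C}_{\cdot,j})=m(\overline{C}_{i,\cdot}-\overline{\overline{C}})$ then yields $2\overline{M}_{i,\cdot}=\overline{C}_{i,\cdot}-\overline{\overline{C}}$. Feeding these back into the displayed equation and dividing by $1+\tfrac{m-k}{k(m-1)}=\tfrac{k(m-1)+m-k}{k(m-1)}$ recovers the announced closed form, with the two coefficients collapsing exactly to $\eta_{m,k}$ and $\tfrac12-\eta_{m,k}$.

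The main obstacle is the contraction estimate underlying convergence: each entry is frozen on steps with $j\notin\D^{(t)}$, and $d_i^{(t)}$ couples all rows of $\Delta^{(t-1)}$, so no naive one-step operator-norm bound on $A^{(t)}$ is available. The column-mean identity $\overline{\Delta}_{\cdot,j}^{(t)}=(1-\alpha)\overline{\Delta}_{\cdot,j}^{(t-1)}$ on updated columns is the quantitative input that survives the coupling and drives average contraction. By contrast, once convergence is in hand, the linear-algebraic derivation of the explicit formula for $M_{i,j}$ is essentially mechanical once the right two averagings have been performed.
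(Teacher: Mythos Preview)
Your overall architecture matches the paper's: derive the signal cancellation in $d_i^{(t)}$, cast $(\Delta^{(t)})$ as an iterated random affine map driven only by the subsets $\D^{(t)}$, establish a contraction estimate to obtain convergence in distribution, and then solve the stationary fixed-point equation for the limiting mean. Two pieces deserve comment.

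\textbf{The contraction step has a real gap.} Your claimed one-step bound $\mathbb E\bigl[\|A^{(t)}v\|_2^2\bigr]\le(1-c)\|v\|_2^2$ is in fact true (with $c=k\alpha(2-\alpha)/m$), and once established it yields convergence by a direct $L^2$ coupling argument---arguably cleaner than the paper's route, which composes $b\ge m/k$ maps and verifies the Diaconis--Freedman log-Lipschitz hypothesis $\mathbb E[\log\mathrm{Lip}(\Psi)]<0$. But your justification does not prove it. The column-mean identity $\overline\Delta_{\cdot,j}^{(t)}=(1-\alpha)\overline\Delta_{\cdot,j}^{(t-1)}$ controls only the subspace $V=\{M:(I_n-\overline J_n)M=0\}$ of matrices with equal rows; on the orthogonal complement $V^\perp=\{M:\overline J_nM=0\}$ all column means vanish identically and your identity is vacuous. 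On $V^\perp$ the linear part acts as $M'\mapsto M'Q^S$ with $Q^S=I_m-\alpha D^S-\tfrac{\alpha}{k}\delta^S(\delta^S)^\top$, and contraction there comes from computing $(Q^S)^2$ and using $3\alpha-2<0$. This is precisely where $\alpha<2/3$ enters, and it is the computation you allude to (``convenient for controlling the off-diagonal coupling'') but never perform. Describing the column-mean identity as ``the quantitative input that\ldots drives average contraction'' is therefore misleading: it handles $V$, but the genuinely coupled direction $V^\perp$ needs a separate argument that you have not supplied.

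\textbf{The limiting expectation is handled more transparently than in the paper.} The paper simply writes down the candidate matrix $U$ and checks by brute-force matrix algebra that $(I-\mathbb E[A^S])U=\mathbb E[B^S]$. You instead \emph{derive} the formula: the reduction $M_{i,j}=\mathbb E[d_i^{(t)}\mid j\in\D^{(t)}]$ is correct, and your two successive averagings (over $i$ to get $\overline M_{\cdot,j}=0$, then over $j$ to get $2\overline M_{i,\cdot}=\overline C_{i,\cdot}-\overline{\overline C}$) collapse the linear system to the stated closed form with no guesswork. This constructive route explains the appearance of $\eta_{m,k}$ and the $\tfrac12$ rather than merely confirming them, and is a genuine improvement over the paper's verification.
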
 

Let us use the notation \[\overline\Delta_{\cdot,j}^{(t)}=\frac{1}{n}\sum_{i=1}^{n}\Delta_{i,j}^{(t)}.\] By combining \eqref{eq:r}, \eqref{eq:R}, \eqref{eq:R2}, and \eqref{eq:G}, we find that \begin{align}
\nonumber \frac{1}{n}G^{(t)}&=\frac{1}{n}\sum_{i=1}^nR_i^{(t)} \\ 
\nonumber &=\frac{1}{k}\sum_{j\in \D^{(t)}}\left(\left(\frac{1}{n}\sum_{i=1}^n x_{j}^{(t)}\right)-\overline C_{\cdot,j}+\overline\Delta_{\cdot,j}^{(t-1)}\right) \\ 
\label{eq:G_formula}&=\frac{1}{k}\sum_{j\in \D^{(t)}}\left(x_{j}^{(t)}-\overline C_{\cdot,j}+\overline\Delta_{\cdot,j}^{(t-1)}\right).  
\end{align}
Therefore,
\begin{align*}
d_{i}^{(t)}&=\frac{1}{n}G^{(t)}-R_i^{(t)} \\ 
&=\frac{1}{k}\sum_{j\in \D^{(t)}}\left(x_{j}^{(t)}-\overline C_{\cdot,j}+\overline\Delta_{\cdot,j}^{(t-1)}\right)-\frac{1}{k}\sum_{j\in \D^{(t)}}\left(x_j^{(t)}-C_{i,j}+\Delta_{i,j}^{(t-1)}\right). 
\end{align*}
The terms with the stimuli $x_j^{(t)}$ cancel, so we are left with 
\begin{equation}\label{eq:d_final}
d_{i}^{(t)}=\frac{1}{k}\sum_{j\in \D^{(t)}}\left(C_{i,j}-\overline C_{\cdot,j}+\overline\Delta_{\cdot,j}^{(t-1)}-\Delta_{i,j}^{(t-1)}\right).
\end{equation} 
This implies that the only randomness influencing the transition from $\Delta^{(t-1)}$ to $\Delta^{(t)}$ is the choice of $\D^{(t)}$ (and not the signals $x_j^{(t)}$). 

We wish to represent the process $(\Delta^{(t)})_{t\geq 0}$ as a Markov chain driven by a random affine map; this will allow us to employ known results from the theory of such Markov chains in order to prove \cref{thm:convergence}. To this end, let $I_\ell\in\mathbb R^{\ell\times \ell}$ denote the $\ell\times \ell$ identity matrix, and let $J_\ell\in\mathbb R^{\ell\times \ell}$ be the 
$\ell\times \ell$ matrix whose entries are all $1$. Let $\overline J_\ell=\frac{1}{\ell}J_\ell$. We denote the transpose of a matrix $M$ by $M^\top$. Let $C^{\mathrm{dev}}\in\mathbb R^{n\times m}$ be the matrix defined by 
\begin{equation}\label{eq:Cdev}
C_{i,j}^{\mathrm{dev}}=C_{i,j}-\overline C_{\cdot,j}
\end{equation} 

For each set $S\subseteq[m]$, let $\delta^{S}\in\mathbb R^{m\times 1}$ be the column indicator vector of $S$ (so $\delta_j^S=1$ and $\delta_{j'}^S=0$ for all $j\in S$ and $j'\not\in S$). Let us also write $D^{S}\in\mathbb R^{m\times m}$ for the diagonal matrix whose $j$-th diagonal entry is $\delta_j^S$. Define the linear map $A^{S}\colon\mathbb R^{n\times m}\to \mathbb R^{n\times m}$ by 
\begin{equation}
A^{S}(M)=M(I_m-\alpha D^{S})+\frac{\alpha}{k}\left(\overline J_n-I_n\right)M\delta^{S}(\delta^{S})^\top
\end{equation} 
and the matrix $B^{S}\in\mathbb R^{n\times m}$ by 
\begin{equation}
B^{S}(M)=\frac{\alpha}{k}C^{\mathrm{dev}}\delta^{S}(\delta^{S})^\top. 
\end{equation} 
We obtain an affine map $\Phi^{S}\colon\mathbb R^{n\times m}\to\mathbb R^{n\times m}$ defined by \[\Phi^{S}(M)=A^{S}(M)+B^{S}.\]  
Combining \eqref{eq:Delta}, \eqref{eq:d_final}, and \eqref{eq:Cdev} with some elementary linear algebra yields the identity 
\begin{equation}\label{eq:DeltaPhi}
\Delta^{(t)}=A^{S^{(t)}}(\Delta^{(t-1)})+B^{S^{(t)}}=\Phi^{S^{(t)}}(\Delta^{(t-1)}).
\end{equation} 

There is a natural inner product $\langle\cdot,\cdot\rangle$ on $\mathbb R^{\ell\times \ell'}$ given by \[\langle M,M'\rangle=\mathrm{Tr}(M^\top M')=\sum_{i=1}^\ell\sum_{j=1}^{\ell'} M_{i,j}M'_{i,j},\] where $\mathrm{Tr}$ denotes trace. This induces the \dfn{Frobenius norm} on $\mathbb R^{\ell\times \ell'}$ given by \[\norm{M}=\langle M,M\rangle^{1/2},\] which makes $\mathbb R^{\ell\times \ell'}$ into a metric space. The \dfn{Lipschitz constant} of a map $F\colon\mathbb R^{\ell\times \ell'}\to\mathbb R^{\ell\times \ell'}$ is \[\mathrm{Lip}(F)=\sup_{M,M'\in\mathbb R^{\ell\times \ell'}}\frac{\norm{F(M)-F(M')}}{\norm{M-M'}}.\] 

The next lemma is the main technical ingredient needed to prove \cref{thm:convergence}. 

\begin{lemma}\label{lem:Lip} 
Fix an integer $b\geq m/k$. Let $S_1,\ldots,S_b$ be independent $k$-element subsets of $[m]$, each chosen uniformly at random, and let $\Psi=\Phi^{S_b}\circ\cdots\circ\Phi^{S_1}$. We have 
\[\mathbb E\left[\log(\mathrm{Lip}(\Psi))\right]<0.\] 
\end{lemma}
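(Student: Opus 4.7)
The plan is to recast $\mathrm{Lip}(\Psi)$ as the operator norm of a composition of linear maps on $(\mathbb R^{n\times m},\norm{\cdot})$ and then exhibit a positive-probability event on which this composition is strictly contracting. Since each $\Phi^{S}$ is affine with linear part $A^{S}$, we have $\mathrm{Lip}(\Psi)=\norm{A^{S_b}\circ\cdots\circ A^{S_1}}_{\mathrm{op}}$.

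The main technical step is to prove $\norm{A^{S}}_{\mathrm{op}}\leq 1$ for every $k$-subset $S\subseteq[m]$, together with a sharp characterization of equality. Writing $v_j=M_{\cdot,j}$ and $w=\sum_{j\in S}v_j$, the definition of $A^{S}$ makes column $j$ of $A^{S}(M)$ equal to $v_j$ for $j\notin S$ and equal to $(1-\alpha)v_j+\frac{\alpha}{k}(\overline J_n-I_n)w$ for $j\in S$. The matrix $\overline J_n-I_n$ is the negative of the orthogonal projection onto $\mathbf 1^\perp\subseteq\mathbb R^n$, so $\langle w,(\overline J_n-I_n)w\rangle=-\norm{(\overline J_n-I_n)w}^2$. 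Expanding the Frobenius norm column-by-column and collecting terms yields the identity
\[\norm{A^{S}(M)}^2-\norm{M}^2=-\alpha(2-\alpha)\sum_{j\in S}\norm{v_j}^2+\frac{\alpha(3\alpha-2)}{k}\norm{(\overline J_n-I_n)w}^2.\]
The hypothesis $\alpha\in(0,2/3)$ makes both coefficients non-positive, so $\norm{A^{S}}_{\mathrm{op}}\leq 1$; equality in this inequality forces $v_j=0$ for every $j\in S$.

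Next I would iterate. Suppose $S_1\cup\cdots\cup S_b=[m]$, put $M_i=A^{S_i}\cdots A^{S_1}(M)$ with $M_0=M\neq 0$, and assume for contradiction that $\norm{M_b}=\norm{M_0}$. Since $\norm{A^{S_i}}_{\mathrm{op}}\leq 1$ at every step, equality must hold at every step, so by the characterization above $(M_{i-1})_{\cdot,j}=0$ for each $j\in S_i$. The corresponding $w$ at step $i$ is then $0$, so all columns of $M_i$ indexed by $S_i$ also vanish; combined with the trivial fact that $A^{S_i}$ leaves columns outside $S_i$ unchanged, this shows that the set $Z_i$ of zero columns of $M_i$ contains $S_1\cup\cdots\cup S_i$. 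Taking $i=b$ gives $Z_b=[m]$, whence $M=0$, a contradiction. By compactness of the unit sphere in $\mathbb R^{n\times m}$ and continuity, $\norm{A^{S_b}\cdots A^{S_1}}_{\mathrm{op}}<1$ whenever the sequence $(S_1,\ldots,S_b)$ covers $[m]$.

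Finally, there are only finitely many length-$b$ tuples of $k$-subsets of $[m]$, so the maximum of the operator norms $\norm{A^{S_b}\cdots A^{S_1}}_{\mathrm{op}}$ over covering tuples is some $\rho<1$. Since $b\geq m/k$, one can exhibit an explicit covering of $[m]$ by $b$ such subsets (e.g., successive $k$-blocks, with the last padded to have size $k$), so the event $E=\{S_1\cup\cdots\cup S_b=[m]\}$ has positive probability. Using the almost-sure bound $\mathrm{Lip}(\Psi)\leq 1$, we conclude
\[\mathbb E[\log\mathrm{Lip}(\Psi)]\leq \mathbb P(E)\log\rho<0.\]
The main obstacle is the operator-norm estimate: one must perform the Frobenius expansion carefully and invoke the projector identity for $\overline J_n-I_n$ to obtain the correct sign, which is precisely where the hypothesis $\alpha<2/3$ is used (through the factor $3\alpha-2$).
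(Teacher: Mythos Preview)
Your proof is correct, and the overall architecture matches the paper's: show $\mathrm{Lip}(A^S)\leq 1$ for every $S$, show strict contraction whenever $S_1\cup\cdots\cup S_b=[m]$, and conclude via the positive probability of covering. The execution of the operator-norm bound, however, is genuinely different. The paper splits $\mathbb R^{n\times m}$ into the invariant subspaces $V=\{M:(I_n-\overline J_n)M=0\}$ and $V^\perp=\{M:\overline J_nM=0\}$, analyzes $A^S$ separately on each (on $V$ it is right multiplication by $I_m-\alpha D^S$, on $V^\perp$ by $Q^S$), and for the strict part chooses a nonzero column and tracks the first time it is hit. You bypass the decomposition entirely: your column-by-column expansion and the projector identity for $\overline J_n-I_n$ give the single formula
\[
\norm{A^S(M)}^2-\norm{M}^2=-\alpha(2-\alpha)\sum_{j\in S}\norm{v_j}^2+\frac{\alpha(3\alpha-2)}{k}\norm{(\overline J_n-I_n)w}^2
\]
valid for all $M$, which specializes to the paper's two formulas on $V$ and $V^\perp$. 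This buys you a clean equality characterization ($v_j=0$ for all $j\in S$) and a uniform induction on zero-column sets for the strict part, rather than two separate arguments. The paper's decomposition, on the other hand, makes the invariance structure explicit, which is reused later when showing $I-\mathbb E[A^S]$ is invertible. One small wording point: from $Z_b=[m]$ you get $M_b=0$, and the contradiction is with $\norm{M_b}=\norm{M_0}>0$; your ``whence $M=0$'' is correct but implicitly uses that equality of norms.
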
 
\begin{proof}
Consider the subspaces 
\[V=\{M\in\mathbb R^{n\times m}:(I_n-\overline J_n)M=0\}\quad\text{and}\quad V^\perp=\{M\in\mathbb R^{n\times m}:\overline J_nM=0\},\] which are orthogonal complements of each other. For $S\subseteq[m]$, let \[Q^S=I_n-\alpha D^S-\frac{\alpha}{k}\delta^S(\delta^S)^\top.\] 

For each subset $S\subseteq[m]$ and all matrices $M\in V$ and $M'\in V^\perp$, we have 
\[A^S(M)=M(I_m-\alpha D^S)\in V\quad\text{and}\quad A^S(M')=M'Q^S\in V^\perp.\] This shows that $V$ and $V^\perp$ are both invariant under $A^S$. Let $A^S\vert_V$ and $A^S\vert_{V^\perp}$ be the restrictions of $A^S$ to $V$ and $V^\perp$, respectively. 

For $M\in V$, we have \[\norm{A^S(M)}=\norm{M(I_m-\alpha D^{S})}\leq\norm{M};\] this shows that $\mathrm{Lip}(A^S\vert_V)\leq 1$. 
For $M'\in V^\perp$, since the matrix $Q^S$ is symmetric, we have 
\[
\norm{A^{S}(M')}^2=\norm{MQ^{S}}^2=\mathrm{Tr}(Q^SM^\top M Q^S)=\mathrm{Tr}(M^\top M (Q^S)^2). 
\]
A straightforward computation shows that 
\[(Q^S)^2=I_m-\alpha(2-\alpha)D^S+\frac{\alpha}{k}(3\alpha-2)\delta^S(\delta^S)^\top.\] Hence, 
\begin{align*}
\norm{A^S(M')}^2&=\mathrm{Tr}\left((M')^\top M' \left(I_m-\alpha(2-\alpha)D^S+\frac{\alpha}{k}(3\alpha-2)\delta^S(\delta^S)^\top\right)\right) \\ 
&=\norm{M'}^2-\alpha(2-\alpha)\norm{M'D^S}^2+\frac{\alpha}{k}(3\alpha-2)\norm{M'\delta^S}^2. 
\end{align*} 
We have assumed that $0<\alpha<2/3$, so 
\[\norm{A^S(M')}^2\leq\norm{M'}^2-\alpha(2-\alpha)\norm{M'D^S}^2\leq\norm{M'}^2.\] This shows that $\mathrm{Lip}(A^S\vert_{V^\perp})\leq 1$. Consequently, $\mathrm{Lip}(A^S)=\max\{\mathrm{Lip}(A^S\vert_V),\mathrm{Lip}(A^S\vert_{V^\perp})\}\leq 1$. 

It follows from the preceding paragraph that 
\[
\mathrm{Lip}(\Psi)=\mathrm{Lip}(A^{S_b}\cdots A^{S_1})\leq\mathrm{Lip}(A^{S_b})\cdots\mathrm{Lip}(A^{S_1})\leq 1. 
\]
Therefore, to prove that $\mathbb E[\log(\mathrm{Lip}(\Psi))]<0$, we just need to show that $\mathrm{Lip}(\Psi)<1$ with positive probability. Because $b>m/k$, the probability that $S_1\cup\cdots\cup S_b=[m]$ is positive. Hence, it suffices to show that $\mathrm{Lip}(\Psi)<1$ if $S_1\cup\cdots\cup S_b=[m]$. 

Suppose $S_1\cup\cdots\cup S_b=[m]$. We have $\mathrm{Lip}(\Psi)=\mathrm{Lip}(A^{S_b}\cdots A^{S_1})$. Therefore, we just need to show that $\norm{A^{S_b}\cdots A^{S_1}(M)}<\norm{M}$ and $\norm{A^{S_b}\cdots A^{S_1}(M')}<\norm{M'}$ for all nonzero $M\in V$ and $M'\in V^\perp$. For the first inequality, we have \[
\norm{A^{S_b}\cdots A^{S_1}(M)}=\norm{M(I_m-\alpha D^{S_1})\cdots (I_m-\alpha D^{S_b})}<\norm{M}.
\] For the second inequality, choose $j^*\in[m]$ such that the $j^*$-th column of $M'$ has at least one nonzero entry, and let $r$ be the smallest element of $[b]$ such that $j^*\in S_r$. Note that the $j^*$-th column of $M'$ is the same as the $j^*$-th column of the matrix $M''=M'Q^{S_1}\cdots Q^{S_{r-1}}=A^{S_{r-1}}\cdots A^{S_1}(M')$. Since $j^*\in S_r$ and $0<\alpha<2/3$, we have 
\begin{align*}
\norm{A^{S_r}(M'')}^2=\norm{M''}^2-\alpha(2-\alpha)\norm{M''D^{S_r}}^2+\frac{\alpha}{k}(3\alpha-2)\norm{M''\delta^{S_r}}^2<\norm{M''}^2. 
\end{align*}
Therefore, 
\begin{align*}
\norm{A^{S_b}\cdots A^{S_1}(M')}&=\norm{A^{S_b}\cdots A^{S_{r+1}}(A^{S_r}(M''))} \\ 
&\leq\norm{A^{S_r}(M'')} \\ 
&<\norm{M''} \\ 
&=\norm{A^{S_{r-1}}\cdots A^{S_1}(M')} \\ 
&\leq\norm{M'}, 
\end{align*}
as desired. 
\end{proof} 

We will appeal to the following special case of a result due to Diaconis and Freedman. 

\begin{theorem}[{\cite[Theorem~1]{DiaconisFreedman}}]\label{thm:DF}
Let $\mathcal X$ be a separable metric space with metric $\varrho$. Let $\Theta$ be a finite set, and for each $\theta\in \Theta$, suppose we have a function $f_\theta\colon \mathcal X\to X$ and a real number $K_\theta\geq 0$ such that $\varrho(f_\theta(x),f_\theta(x'))\leq K_\theta\varrho(x,x')$ for all $x,x'\in\mathcal X$. Let $\nu$ be a probability measure on $\Theta$, and let $\theta_1,\theta_2,\ldots$ be an i.i.d.\ sequence of elements of $\Theta$ with distribution $\nu$. Let $X_0\in\mathcal X$, and for each integer $t\geq 1$, let $X_t=(f_{\theta_{t}}\circ f_{\theta_{t-1}}\circ\cdots\circ f_{\theta_1})(X_0)$. Assume that $\sum_{\theta\in\Theta} \log(K_\theta)\nu(\theta)<0$. Then the Markov chain $(X_t)_{t\geq 0}$ has a unique stationary distribution $\pi$, and the law of $X_t$ converges to $\pi$ exponentially. 
\end{theorem}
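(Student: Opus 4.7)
The plan is to follow the backward-iteration strategy, a standard coupling technique for iterated random functions. The forward chain $X_t=(f_{\theta_t}\circ\cdots\circ f_{\theta_1})(X_0)$ need not converge pointwise, but the \emph{backward chain} $Y_t:=(f_{\theta_1}\circ\cdots\circ f_{\theta_t})(X_0)$ does, and for each fixed $t$ the random variables $Y_t$ and $X_t$ share the same distribution because $(\theta_1,\ldots,\theta_t)$ is equidistributed with its reversal. The candidate limiting distribution $\pi$ for $X_t$ will therefore be the law of $\lim_t Y_t$, and stationarity, uniqueness, and the rate of convergence will all follow by exploiting the Lipschitz structure.

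For the almost sure convergence of $(Y_t)$, I would control consecutive distances. Let $B=\max_{\theta\in\Theta}\varrho(X_0,f_\theta(X_0))$, which is finite because $\Theta$ is finite. Peeling off the innermost map $f_{\theta_{t+1}}$ gives
\[
\varrho(Y_t,Y_{t+1})\leq K_{\theta_1}K_{\theta_2}\cdots K_{\theta_t}\,\varrho(X_0,f_{\theta_{t+1}}(X_0))\leq B\prod_{i=1}^{t}K_{\theta_i}.
\]
The strong law of large numbers applied to the i.i.d.\ sequence $(\log K_{\theta_i})$ (with the convention $\log 0=-\infty$) yields $\tfrac{1}{t}\sum_i\log K_{\theta_i}\to\sum_\theta\nu(\theta)\log K_\theta<0$ almost surely, so $\prod_i K_{\theta_i}$ decays exponentially a.s. Hence $\sum_t\varrho(Y_t,Y_{t+1})<\infty$ a.s., so $(Y_t)$ is almost surely Cauchy and converges a.s.\ to a limit $Y_\infty$ (in a completion of $\mathcal X$ if necessary; in the paper's application $\mathcal X=\mathbb R^{n\times m}$ is already complete).

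Set $\pi$ equal to the law of $Y_\infty$. Since $X_t\stackrel{d}{=}Y_t\to Y_\infty$ a.s., the law of $X_t$ converges weakly to $\pi$. Stationarity follows from writing $Y_{t+1}=f_{\theta_1}(W_t)$ with $W_t=(f_{\theta_2}\circ\cdots\circ f_{\theta_{t+1}})(X_0)$ independent of $\theta_1$ and equidistributed with $Y_t$; letting $t\to\infty$ yields $Y_\infty\stackrel{d}{=}f_{\theta_1}(Y_\infty')$ with $Y_\infty'\sim\pi$ independent of $\theta_1$, which is the invariance equation. Uniqueness comes from a further coupling: if $\pi'$ is any stationary distribution and the backward chain is started from $X_0\sim\pi'$ independent of the noise, the contraction $\varrho(Y_t(X_0),Y_t(x_0))\leq\bigl(\prod_i K_{\theta_i}\bigr)\varrho(X_0,x_0)\to 0$ a.s.\ forces the same limit $Y_\infty$, so $\pi=\pi'$.

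The main obstacle will be upgrading to an \emph{exponential} rate, since $\mathbb E[\log K_\theta]<0$ does not a priori imply $\mathbb E[K_\theta]<1$. The standard workaround exploits finiteness of $\Theta$: the function $p\mapsto\mathbb E[K_\theta^p]=\sum_\theta\nu(\theta)K_\theta^p$ equals $1$ at $p=0$ with right-derivative $\mathbb E[\log K_\theta]<0$ (with a separate argument handling the case where some $K_\theta=0$, which only helps), so some $p\in(0,1]$ yields $q:=\mathbb E[K_\theta^p]<1$. Because $\varrho^p$ satisfies $\varrho(x,z)^p\leq\varrho(x,y)^p+\varrho(y,z)^p$ for $p\in(0,1]$,
\[
\mathbb E\bigl[\varrho(Y_t,Y_\infty)^p\bigr]\leq\sum_{s\geq t}\mathbb E\bigl[\varrho(Y_s,Y_{s+1})^p\bigr]\leq B^p\sum_{s\geq t}q^s,
\]
which decays geometrically in $t$. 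Translating this to a standard weak-convergence metric (Wasserstein-$p$ or Prokhorov) gives the claimed exponential convergence of the law of $X_t$ to $\pi$.
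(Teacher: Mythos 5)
The paper does not prove this theorem---it is quoted (as a special case) from Diaconis and Freedman, and your backward-iteration argument is precisely the proof given in that reference. Your reconstruction is correct, including the two delicate points: the need for completeness of $\mathcal X$ (which the stated hypotheses omit but which you rightly flag and which holds in the paper's application $\mathcal X=\mathbb R^{n\times m}$), and the upgrade from $\mathbb E[\log K_\theta]<0$ to $\mathbb E[K_\theta^p]<1$ for some small $p\in(0,1]$, which is exactly what yields the exponential rate.
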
 

We can now combine \cref{lem:Lip,thm:DF} to prove \cref{thm:convergence}. 

\begin{proof}[Proof of \cref{thm:convergence}]
Let $\mathcal X=\mathbb R^{n\times m}$; this is a separable metric space with metric $\varrho$ given by $\varrho(M,M')=\norm{M-M'}$. Fix an integer $b\geq m/k$, and take $\Theta$ be the collection of $b$-tuples of $k$-element subsets of $[m]$. Let $\nu$ be the uniform distribution on $\Theta$. For each tuple $\theta=(S_1,\ldots,S_b)\in\Theta$, let $f_\theta=\Phi^{S_b}\cdots\Phi^{S_1}$, and let $K_\theta=\mathrm{Lip}(f_\theta)$. Let $X_t=\Delta^{(bt)}$; it follows from \eqref{eq:DeltaPhi} that $X_t=(f_{\theta_t}\circ f_{\theta_{t-1}}\circ\cdots\circ f_{\theta_1})(X_0)$. \cref{lem:Lip} tells us that 
$\sum_{\theta\in\Theta}\log(K_\theta)\nu(\theta)<0$. All of the hypotheses of \cref{thm:DF} are satisfied, so we conclude that the Markov chain $(X_t)_{t\geq 0}$ has a unique stationary distribution $\pi$ and that the law of $X_t$ converges to $\pi$ exponentially. Since $X_t=\Delta^{(bt)}$, this proves the first statement of the theorem. 

We now know that the limit $E=\lim_{t\to\infty}\mathbb E\left[\Delta^{(t)}\right]\in\mathbb R^{n\times m}$ exists. Let $I$ denote the identity map on $\mathbb R^{n\times m}$. It follows from \eqref{eq:DeltaPhi} that $E$ satisfies the equation 
\[(I-\mathbb E[A^S])E=\mathbb E[B^S],\] where the expected values are computed by choosing $S$ uniformly at random from the collection of $k$-element subsets $[m]$. To see that this equation has a unique solution, note that, by the proof of \cref{lem:Lip}, the linear map $\mathbb E[A^S]\colon\mathbb R^{n\times m}\to\mathbb R^{n\times m}$ has a Lipschitz constant strictly less than $1$, implying that it has no nonzero fixed points. It follows that $I-\mathbb E[A^S]$ is invertible, so we must have $E=(I-\mathbb E[A^S])^{-1}\mathbb E[B^S]$. 

Let \[p_1=\frac{k}{m}\quad\text{and}\quad p_2=\frac{k(k-1)}{m(m-1)}.\] Let 
\[W=\mathbb E[\delta^S(\delta^S)^\top]=p_2J_m+(p_1-p_2)I_m.\] 
Consider the matrices $R,C^{\mathrm{dev}}\in\mathbb R^{n\times m}$ defined by \[R_{i,j}=\overline C_{i,\cdot}-\overline{\overline C}\quad\text{and}\quad C^{\mathrm{dev}}_{i,j}=\overline C_{i,j}-\overline C_{\cdot,j}.\] Let $U=\left(\frac{1}{2}-\eta_{m,k}\right)R+\eta_{m,k}C^{\mathrm{dev}}$, where 
\[\eta_{m,k}=\frac{m-k}{k(m-1)+m-k}.\] 
We have \[\overline J_n R=\overline J_nC^{\mathrm{dev}}=0,\quad RJ_m=mR,\quad\text{and} \quad C^{\mathrm{dev}}J_m=mC^{\mathrm{dev}}.\] A straightforward computation shows that 
\[RW=\frac{k^2}{m}R\quad\text{and}\quad C^{\mathrm{dev}}W=(p_1-p_2)C^{\mathrm{dev}}+p_2mR.\] From this, we compute that 
\[U=(1-p_1\alpha)U+\frac{\alpha}{k}(\overline{J}_n-I_n)UW+\frac{\alpha}{k}C^{\mathrm{dev}}W=\mathbb E[A^S]U+\mathbb E[B^S].\] It follows that $E=U$, as desired. 
\end{proof} 

\section{The Consensus Distribution} 

Let $\overline \mu$ and $\sigma$ be the mean and standard deviation, respectively, of the probability distribution $\mu$. (If $\mu$ is the uniform distribution on $[0,1]$, then $\overline\mu=1/2$ and $\sigma=1/\sqrt{12}$.) 

For each $j\in[m]$, we are interested in the normalized consensus value \[\overline G^{(t)}=\frac{1}{n}G^{(t)}\] conditioned on the event that $j\in\D^{(t)}$. We have the following theorem. 

\begin{theorem}
Fix $j\in[m]$. Let $\widehat C_j=\frac{1}{m-1}\sum_{r\in[m]\setminus\{j\}}\overline C_{\cdot,r}$. The conditional expectation of $\overline G^{(t)}$ given that $j\in\D^{(t)}$ is given by 
\[\mathbb E\left[\overline G^{(t)}\mid j\in\D^{(t)}\right]=\overline\mu-\frac{1}{k}\left(\overline C_{\cdot,j}-(k-1)\widehat C_j\right).\] The conditional variance of $\overline G^{(t)}$ given that $j\in\D^{(t)}$ is given by 
\[\Var\left(\overline G^{(t)}\mid j\in\D^{(t)}\right)=\frac{1}{k}\sigma^2+\frac{(k-1)(m-k)}{k^2(m-1)(m-2)}\sum_{r\in[m]\setminus \{j\}}\left(\overline C_{\cdot,r}-\widehat C_j\right)^2.\]
\end{theorem}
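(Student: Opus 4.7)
The plan is to expand $\overline G^{(t)}$ using \eqref{eq:G_formula}, eliminate the exponential moving average contribution by a structural argument, and then carry out the calculation by conditioning on $\D^{(t)}$. Starting from
\[\overline G^{(t)}=\frac{1}{k}\sum_{j'\in \D^{(t)}}\left(x_{j'}^{(t)}-\overline C_{\cdot,j'}+\overline\Delta_{\cdot,j'}^{(t-1)}\right),\]
I would first observe that $\overline J_n\Delta^{(t)}=0$ for all $t$ under the natural initialization $\Delta^{(0)}=0$ (and, more generally, in the unique stationary distribution). Indeed, left-multiplying the recursion \eqref{eq:DeltaPhi} by $\overline J_n$ gives $\overline J_n\Delta^{(t)}=\overline J_n\Delta^{(t-1)}(I_m-\alpha D^{\D^{(t)}})$, because $\overline J_n(\overline J_n-I_n)=0$ annihilates the second summand in $A^S$ while $\overline J_n C^{\mathrm{dev}}=0$ annihilates $B^S$. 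A simple induction then gives $\overline\Delta_{\cdot,j'}^{(t-1)}=0$ identically, so the formula for $\overline G^{(t)}$ reduces to $\frac{1}{k}\sum_{j'\in\D^{(t)}}(x_{j'}^{(t)}-\overline C_{\cdot,j'})$.

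For the conditional mean, the signals $x_{j'}^{(t)}$ are i.i.d.\ with mean $\overline\mu$ and are independent of $\D^{(t)}$, so they contribute $\overline\mu$. Conditional on $j\in\D^{(t)}$, the set $\D^{(t)}$ is uniform on $k$-subsets of $[m]$ containing $j$, so each $j'\in[m]\setminus\{j\}$ lies in $\D^{(t)}$ with probability $(k-1)/(m-1)$. Hence $\mathbb E\bigl[\sum_{j'\in\D^{(t)}}\overline C_{\cdot,j'}\bigm| j\in\D^{(t)}\bigr]=\overline C_{\cdot,j}+(k-1)\widehat C_j$, which delivers the mean formula after dividing by $-k$ and adding $\overline\mu$.

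For the conditional variance, I would apply the law of total variance, conditioning on $S=\D^{(t)}$ with $j\in S$. The signals contribute a conditional variance of $\sigma^2/k$ that is constant in $S$, and this passes through as the first term. The conditional mean $\overline\mu-\frac{1}{k}\sum_{j'\in S}\overline C_{\cdot,j'}$ is a function of $S$; since $\overline C_{\cdot,j}$ is fixed by the conditioning, only the sum $\sum_{r\in S\setminus\{j\}}\overline C_{\cdot,r}$ contributes variance. Writing this as $\sum_{r\neq j}\overline C_{\cdot,r}Y_r$ with $Y_r=\mathbf 1[r\in \D^{(t)}]$, I would compute $\Var(Y_r\mid j\in\D^{(t)})=\frac{(k-1)(m-k)}{(m-1)^2}$ and, for distinct $r,r'\in[m]\setminus\{j\}$, $\mathrm{Cov}(Y_r,Y_{r'}\mid j\in\D^{(t)})=-\frac{(k-1)(m-k)}{(m-1)^2(m-2)}$. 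Expanding the resulting bilinear form and applying the identity $\sum_{r\neq j}\overline C_{\cdot,r}^2-(m-1)\widehat C_j^2=\sum_{r\neq j}(\overline C_{\cdot,r}-\widehat C_j)^2$ collapses the double sum into the claimed centered form, producing the second term after dividing by $k^2$.

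The main obstacle is the covariance calculation for the indicators $Y_r$ and the algebraic collapse that reassembles the resulting bilinear form as a sum of squared deviations. This is routine but requires care with the common prefactor $(k-1)(m-k)/((m-1)^2(m-2))$; the preliminary elimination of the EMA term via the $\overline J_n$-invariance is also essential, since otherwise the variance would inherit an extra contribution from $\Delta^{(t-1)}$.
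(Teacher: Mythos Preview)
Your proposal is correct and follows essentially the same route as the paper: eliminate the EMA contribution by showing the column averages $\overline\Delta_{\cdot,j'}^{(t)}$ vanish under the initialization $\Delta^{(0)}=0$, then compute the conditional mean and variance of the simplified expression via the inclusion probabilities and covariances of the indicators $Y_r=\mathbf 1[r\in\D^{(t)}]$. The only cosmetic differences are that the paper obtains $\overline\Delta_{\cdot,j}^{(t)}=0$ from the scalar identity $\sum_i d_i^{(t)}=0$ rather than from left-multiplying the matrix recursion by $\overline J_n$, and it phrases the variance split as independence of the signal and competency pieces rather than as the law of total variance; these are the same arguments in different notation.
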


\begin{proof}
It is immediate from \eqref{eq:G} and \eqref{eq:d} that $\sum_{i=1}^n d_i^{(t)}=0$. Therefore, 
\[\overline\Delta_{\cdot,j}^{(t)}=\begin{cases} (1-\alpha)\overline{\Delta}_{\cdot,j}^{(t-1)} & \mbox{if }j\in \D^{(t)} \\  \Delta_{i,j}^{(t-1)} & \mbox{if }j\not\in \D^{(t)}.  \end{cases} \]
Since $\Delta^{(0)}=0$, we must have $\overline\Delta_{\cdot,j}^{(t)}=0$ for all $t\geq 0$. Hence, if we condition on the event that $j\in\D^{(t)}$, then \eqref{eq:G_formula} tells us that 
\[\overline G^{(t)}=\frac{1}{k}\sum_{r\in \D^{(t)}}x_r^{(t)}-\frac{1}{k}\sum_{r\in\D^{(t)}\setminus \{j\}}\overline C_{\cdot,r}-\frac{1}{k}\overline C_{\cdot,j}.\] This immediately implies the desired formula for $\mathbb E[G^{(t)}\mid j\in\D^{(t)}]$. 

Since the signals $x_j^{(t)}$ are chosen independently at random from the distribution $\mu$, the variance of the random variable $\frac{1}{k}\sum_{j\in\D^{(t)}}x_j^{(t)}$ is $\frac{1}{k}\sigma^2$. The variance of the random variable $\frac{1}{k}\sum_{r\in\D^{(t)}\setminus\{j\}}\overline C_{\cdot,r}$ is 
\begin{align*}
&\hphantom{==}\frac{1}{k^2}{}\sum_{r\in[m]\setminus \{j\}}\overline C_{\cdot,r}^2\Var(\delta^{(\D^{(t)})}_r)+\frac{1}{k^2}\sum_{\substack{r,\ell\in[m]\\ r\neq\ell}}\overline C_{\cdot,r}\overline C_{\cdot,\ell}\mathrm{Cov}(\delta^{(\D^{(t)})}_r,\delta^{(\D^{(t)})}_\ell) \\ 
&=\frac{(k-1)(m-k)}{k^2(m-1)^2}\sum_{r\in[m]\setminus \{j\}}\overline C_{\cdot,r}^2-\frac{(k-1)(m-k)}{k^2(m-1)^2(m-2)}\sum_{\substack{r,\ell\in[m]\setminus\{j\}\\ r\neq\ell}}\overline C_{\cdot,r}\overline C_{\cdot,\ell} \\ 
&=\frac{(k-1)(m-k)}{k^2(m-1)^2(m-2)}\left((m-2)\sum_{r\in[m]\setminus \{j\}}\overline C_{\cdot,r}^2-\sum_{\substack{r,\ell\in[m]\setminus\{j\}\\ r\neq\ell}}\overline C_{\cdot,r}\overline C_{\cdot,\ell}\right) \\ 
&=\frac{(k-1)(m-k)}{k^2(m-1)^2(m-2)}\left((m-1)\sum_{r\in[m]\setminus \{j\}}\overline C_{\cdot,r}^2-\left((m-1)\widehat C_j\right)^2\right) \\ &=\frac{(k-1)(m-k)}{k^2(m-1)(m-2)}\left(\sum_{r\in[m]\setminus \{j\}}\overline C_{\cdot,r}^2+(m-1)\widehat C_j^2-2\widehat C_j\sum_{r\in[m]\setminus\{j\}}\overline C_{\cdot,r}\right) \\ &=\frac{(k-1)(m-k)}{k^2(m-1)(m-2)}\sum_{r\in[m]\setminus \{j\}}\left(\overline C_{\cdot,r}-\widehat C_j\right)^2.
\end{align*}
Since these two random variables are independent, the desired formula for $\Var(\overline G^{(t)}\mid j\in\D^{(t)})$ follows.  
\end{proof} 

\section{Worked Example}

To illustrate the mechanics of the Minary framework, a Python simulation of a stochastic Minary is available \cite{code} wherein we have prepared 5 perspectives along with 19 dimensions identified with certain competency labels. Each perspective has been assigned a competency value in $[0,1]$ for each dimension; together, these roughly create profiles that reflect their respective ``archetypes'' of the perspectives. This arrangement is large enough to produce rich dynamics while small enough to not be unwieldy.

This particular example has perspectives evaluate multiple semantic dimensions coupled together 3 at a time by responding with an average of their responses across all active dimensions. This reflects the idea that each iteration represents one holistic unit that require all three competencies at once.

\subsection{Setup}

We consider the following system:
\begin{itemize}
    \item $n = 5$ perspectives: The True Artist ($\p_1$), The Executive Director ($\p_2$), The Technician ($\p_3$), The Critic ($\p_4$), and The Fan ($\p_5$)
    \item $m = 19$ semantic dimensions: ``3d modeling'' ($\s_1$), ``anatomy'' ($\s_2$), ``artwork similarity'' ($\s_3$), ``audience relevance'' ($\s_4$), ``brand voice'' ($\s_5$), ``character design'' ($\s_6$), ``color grading'' ($\s_7$), ``color scheme'' ($\s_8$), ``costume design'' ($\s_9$), ``fashion trend'' ($\s_{10}$), ``illustration'' ($\s_{11}$), ``information redaction'' ($\s_{12}$), ``interior design'' ($\s_{13}$), ``modern art'' ($\s_{14}$), ``photographic composition'' ($\s_{15}$), ``physics'' ($\s_{16}$), ``sentiment'' ($\s_{17}$), ``usability'' ($\s_{18}$), ``visual ad'' ($\s_{19}$)
    \item $k = 3$ active dimensions at each time step (dimensions are coupled when perspectives average across all active dimensions)
    \item $\alpha = 0.02$ (step size for exponential moving average).
\end{itemize}

We will work through an iteration in which the dimensions ``brand voice'', ``modern art'', and ``physics'' are active. The competency matrix $C \in \mathbb{R}^{5 \times 3}$ is:
\[
C = \begin{bmatrix}
0.95 & 0.20 & 0.50 \\
0.70 & 0.97 & 0.30 \\
0.50 & 0.30 & 0.95 \\
0.80 & 0.87 & 0.10 \\
0.60 & 0.70 & 0.30
\end{bmatrix}
\]

We initialize $\Delta^{(0)} = 0$ (the zero matrix). At time $t = 1$, all three dimensions are active: $M^{(1)} = \{5,14,16\}$, and we draw signals from a uniform distribution on $[0, 1]$: $x^{(1)} = [0.6394, 0.0250, 0.2750]$.

\subsection{Step-by-Step Computation}

\subsubsection{Step 1: Raw Responses}

Using \cref{eq:r}, we compute $r_{i,j}^{(1)} = x_j^{(1)} - C_{i,j}$ for each perspective $i$ and dimension $j$:

\begin{align*}
\text{The True Artist:} \quad 
r_{1,1}^{(1)} &= 0.6394 - 0.95 = -0.3106 \\
r_{1,2}^{(1)} &= 0.0250 - 0.20 = -0.1750 \\
r_{1,3}^{(1)} &= 0.2750 - 0.50 = -0.2250
\end{align*}

\begin{align*}
\text{The Executive Director:} \quad 
r_{2,1}^{(1)} &= 0.6394 - 0.70 = -0.0606 \\
r_{2,2}^{(1)} &= 0.0250 - 0.97 = -0.9450 \\
r_{2,3}^{(1)} &= 0.2750 - 0.30 = -0.0250
\end{align*}

\begin{align*}
\text{The Technician:} \quad 
r_{3,1}^{(1)} &= 0.6394 - 0.50 = 0.1394 \\
r_{3,2}^{(1)} &= 0.0250 - 0.30 = -0.2750 \\
r_{3,3}^{(1)} &= 0.2750 - 0.95 = -0.6750
\end{align*}

\begin{align*}
\text{The Critic:} \quad 
r_{4,1}^{(1)} &= 0.6394 - 0.80 = -0.1606 \\
r_{4,2}^{(1)} &= 0.0250 - 0.87 = -0.8450 \\
r_{4,3}^{(1)} &= 0.2750 - 0.10 = 0.1750
\end{align*}

\begin{align*}
\text{The Fan:} \quad 
r_{5,1}^{(1)} &= 0.6394 - 0.60 = 0.0394 \\
r_{5,2}^{(1)} &= 0.0250 - 0.70 = -0.6750 \\
r_{5,3}^{(1)} &= 0.2750 - 0.30 = -0.0250
\end{align*}

\subsubsection{Step 2: Adjusted Responses}

Since $\Delta^{(0)} = 0$, the adjusted responses $R_{i,j}^{(1)} = r_{i,j}^{(1)}$ are identical to the raw responses.

\subsubsection{Step 3: Average Adjusted Responses}

Each perspective averages its adjusted responses across all active dimensions. This single average value is then used for all dimensions, creating a coupling between them:

\begin{align*}
R_1^{(1)} &= \frac{1}{3}(-0.3106 + (-0.1750) + (-0.2250)) = \frac{-0.7105}{3} = -0.2368 \\
R_2^{(1)} &= \frac{1}{3}(-0.0606 + (-0.9450) + (-0.0250)) = \frac{-1.0305}{3} = -0.3435 \\
R_3^{(1)} &= \frac{1}{3}(0.1394 + (-0.2750) + (-0.6750)) = \frac{-0.8105}{3} = -0.2702 \\
R_4^{(1)} &= \frac{1}{3}(-0.1606 + (-0.8450) + 0.1750) = \frac{-0.8305}{3} = -0.2768 \\
R_5^{(1)} &= \frac{1}{3}(0.0394 + (-0.6750) + (-0.0250)) = \frac{-0.6605}{3} = -0.2202
\end{align*}

\subsubsection{Step 4: Consensus (Superposition)}

The consensus is computed via superposition (linear summation):
\begin{align*}
G^{(1)} &= \sum_{i=1}^{5} R_i^{(1)} \\
&= -0.2368 + (-0.3435) + (-0.2702) + (-0.2768) + (-0.2202) \\
&= -1.3476
\end{align*}

\subsubsection{Step 5: Normalized Consensus and Learning Signals}

The normalized consensus is:
\[
\bar{G}^{(1)} = \frac{G^{(1)}}{n} = \frac{-1.3476}{5} = -0.2695
\]

The learning signal for each perspective (\cref{eq:d}) is:
\begin{align*}
d_1^{(1)} &= \bar{G}^{(1)} - R_1^{(1)} = -0.2695 - (-0.2368) = -0.0327 \\
d_2^{(1)} &= \bar{G}^{(1)} - R_2^{(1)} = -0.2695 - (-0.3435) = 0.0740 \\
d_3^{(1)} &= \bar{G}^{(1)} - R_3^{(1)} = -0.2695 - (-0.2702) = 0.0007 \\
d_4^{(1)} &= \bar{G}^{(1)} - R_4^{(1)} = -0.2695 - (-0.2768) = 0.0073 \\
d_5^{(1)} &= \bar{G}^{(1)} - R_5^{(1)} = -0.2695 - (-0.2202) = -0.0493
\end{align*}

\textbf{Verification:} $\sum_{i=1}^{5} d_i^{(1)} = -0.0327 + 0.0740 + 0.0007 + 0.0073 + (-0.0493) = 0$

\subsubsection{Step 6: Update Exponential Moving Average}

Since all perspectives use a single averaged response value across all dimensions, the learning signal $d_i^{(1)}$ is applied uniformly to all dimensions. Using $\alpha = 0.02$:

\begin{align*}
\text{The True Artist:} \quad \Delta_{1,j}^{(1)} &= 0.02 \times (-0.0327) = -0.000653 \text{ for } j \in \{1,2,3\} \\
\text{The Executive Director:} \quad \Delta_{2,j}^{(1)} &= 0.02 \times 0.0740 = 0.001480 \text{ for } j \in \{1,2,3\} \\
\text{The Technician:} \quad \Delta_{3,j}^{(1)} &= 0.02 \times 0.0007 = 0.000013 \text{ for } j \in \{1,2,3\} \\
\text{The Critic:} \quad \Delta_{4,j}^{(1)} &= 0.02 \times 0.0073 = 0.000147 \text{ for } j \in \{1,2,3\} \\
\text{The Fan:} \quad \Delta_{5,j}^{(1)} &= 0.02 \times (-0.0493) = -0.000987 \text{ for } j \in \{1,2,3\}
\end{align*}

The complete $\Delta^{(1)}$ matrix is:
\[
\Delta^{(1)} = \begin{bmatrix}
-0.000653 & -0.000653 & -0.000653 \\
0.001480 & 0.001480 & 0.001480 \\
0.000013 & 0.000013 & 0.000013 \\
0.000147 & 0.000147 & 0.000147 \\
-0.000987 & -0.000987 & -0.000987
\end{bmatrix}
\]

\subsection{Key Observations}

This worked example demonstrates the fundamental properties of the Minary framework:

\begin{enumerate}
    \item \textbf{Signal Cancellation}: The input signals $x^{(1)} = [0.6394, 0.0250, 0.2750]$ appear in the raw responses but completely cancel out in the learning signals. As shown in \cref{eq:d_final}, only competency-based differences remain in $d_i^{(1)}$, demonstrating functional closure.
    
    \item \textbf{Information Conservation}: $\sum_{i=1}^{5} d_i^{(1)} = 0$ exactly, confirming that the system preserves information through linear superposition rather than destroying it through multiplicative collapse. This property enables the system to maintain coherence indefinitely without converging to a degenerate state.
    
    \item \textbf{Coupled Dimension Behavior}: Each perspective averages across all active dimensions and applies the same learning signal uniformly. This creates dimensional coupling where $\Delta_{i,1}^{(1)} = \Delta_{i,2}^{(1)} = \Delta_{i,3}^{(1)}$ for each perspective $i$. This models scenarios where evaluations are holistic units of work rather than dimension-specific sub-units of work. (Note that it is also possible to respond with a full vector that maintains dimensional independence, which maintains simple, orthogonal dynamics.)
    
    \item \textbf{Perspective-Specific Adaptation}: The Executive Director, whose averaged response was most negative relative to the normalized consensus ($-0.3435$ vs $-0.2695$), receives the largest positive adjustment ($+0.0740$). Conversely, The Fan receives a negative adjustment ($-0.0493$). This demonstrates how perspectives learn to align with collective behavior while maintaining their individual competency profiles.
    
    \item \textbf{Emergent Self-Reference}: The exponential moving average matrices $\Delta^{(t)}$ evolve over time, forming an emergent semantic topology. This topology represents each perspective's learned adjustments independently of external signals, creating a self-referential structure that defines the system's organizational closure.
\end{enumerate}

\begin{figure}[]
    \centering
    \includegraphics[width=0.75\linewidth]{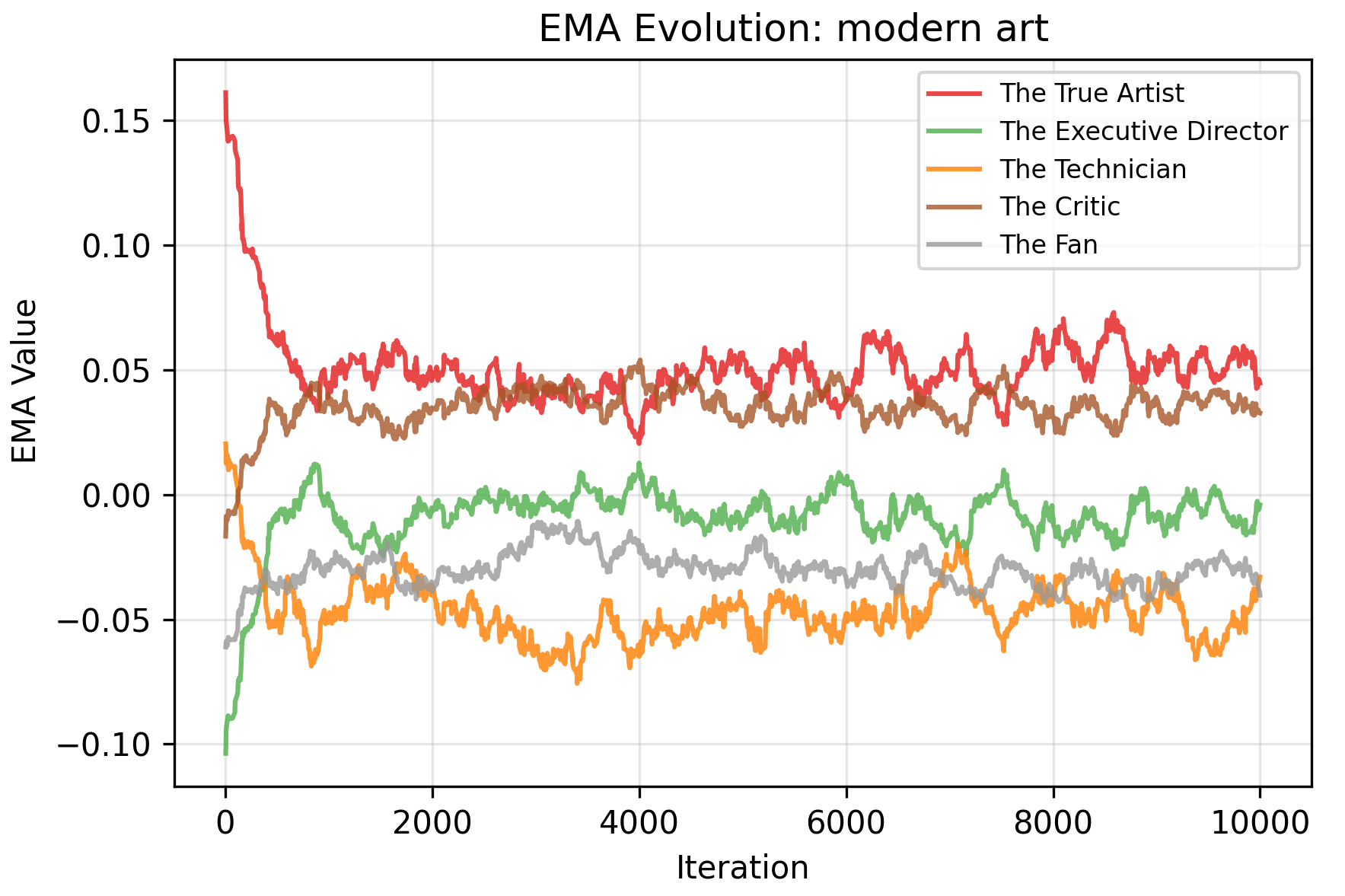}   
    \includegraphics[width=0.45\linewidth]{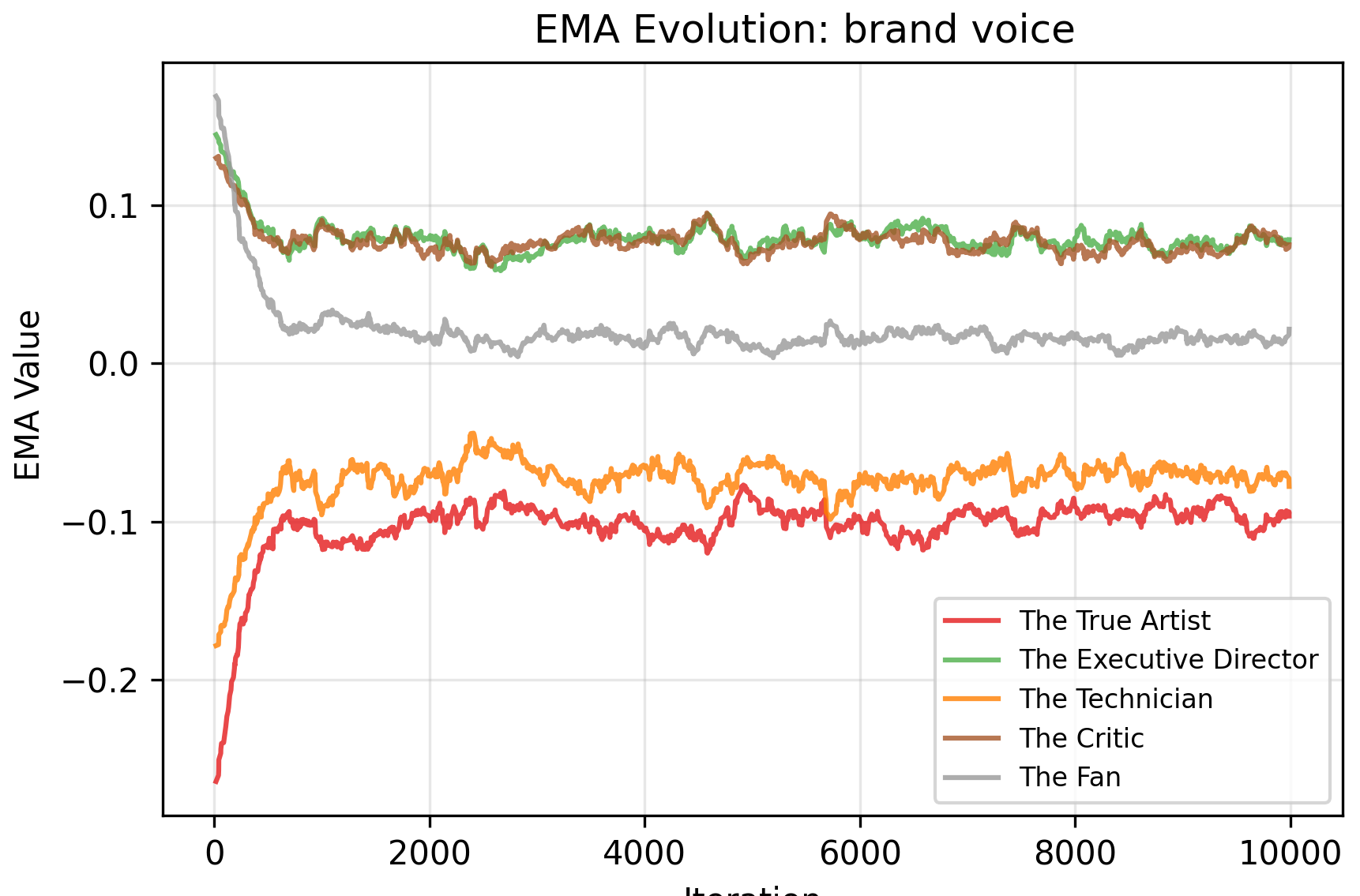}
    \includegraphics[width=0.45\linewidth]{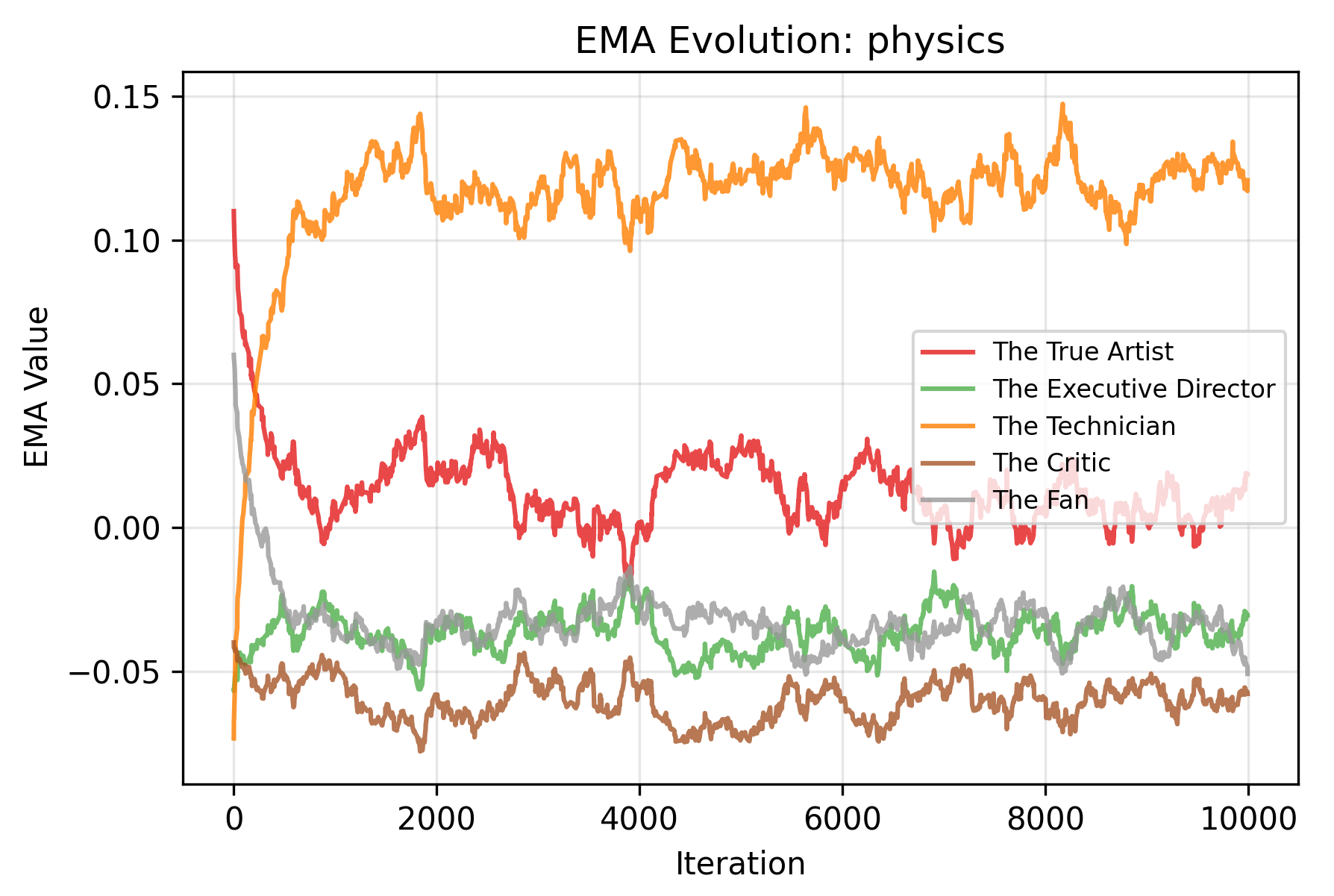}
\end{figure}

After 10000 such iterations with varying active dimensions, the system has developed a rich internal structure where the matrices $\Delta^{(t)}$ maintain the overall perspective archetypes in a dynamic way without becoming rigid or static.

\begin{figure}[]
    \centering
    \includegraphics[width=1\linewidth]{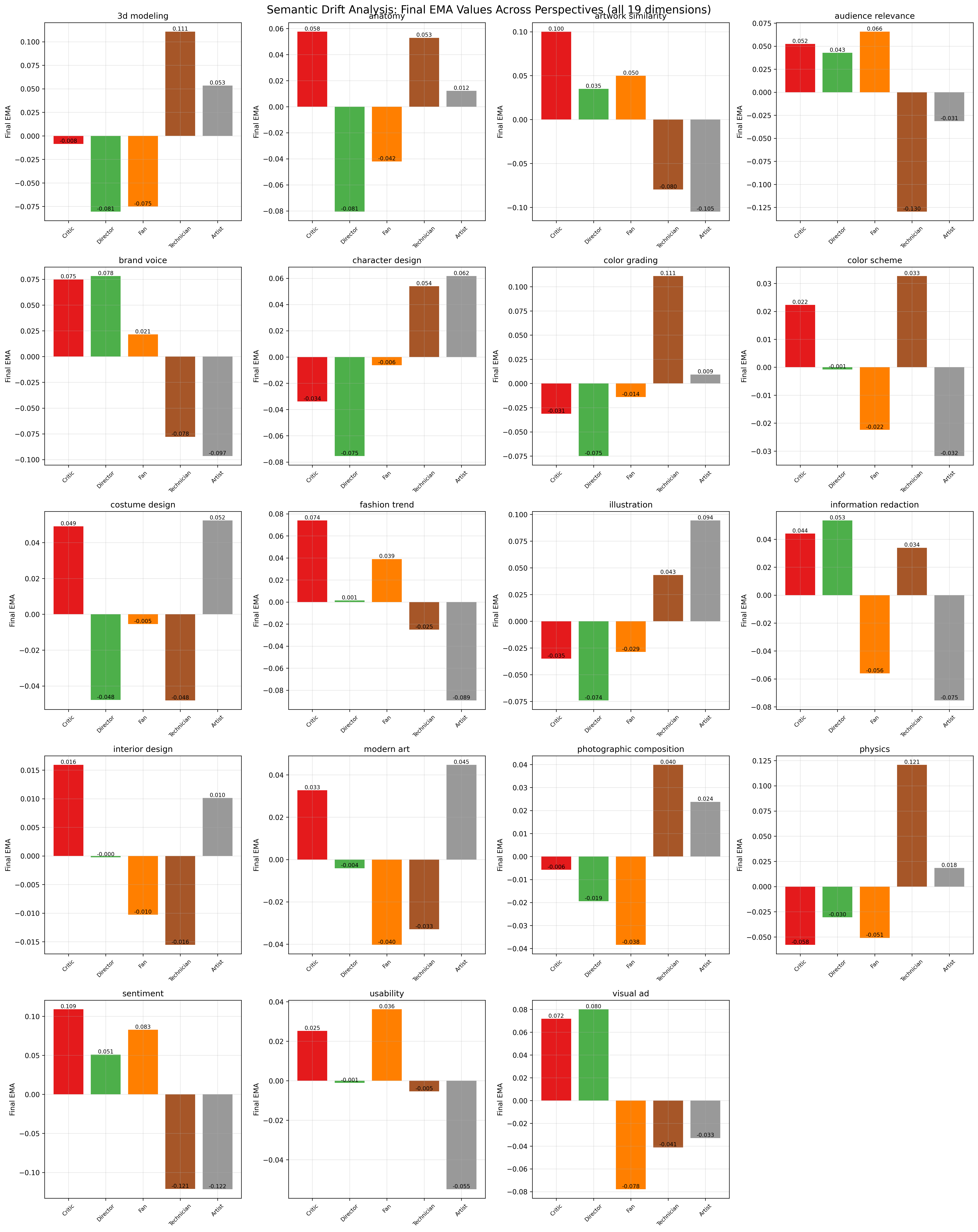}
\end{figure}

\subsection{Alternative Worked Example: Promotion of The Generalist.}

\subsubsection{Alternative Setup}

To clearly demonstrate a non-obvious emergent property of Minary, we define a minimalist configuration that incorporates a ``Generalist'' perspective.

\begin{itemize}
    \item $n = 3$ perspectives: The Specialist ($\p_1$), The Generalist ($\p_2$), The Anti-Specialist ($\p_3$)
    \item $m = 6$ semantic dimensions: $\s_1,\s_2,\ldots,\s_6$
    \item $k = 3$ active dimensions per iteration
    \item $\alpha = 0.02$
\end{itemize}

The competency matrix $C \in \mathbb{R}^{3 \times 6}$ is designed to highlight the phenomenon:

\[
C = \begin{pmatrix}
0.95 & 0.90 & 0.85 & 0.15 & 0.10 & 0.05 \\
0.50 & 0.50 & 0.50 & 0.50 & 0.50 & 0.50 \\
0.05 & 0.10 & 0.15 & 0.85 & 0.90 & 0.95
\end{pmatrix}
\]

The Specialist excels at dimensions $\s_1,\s_2,\s_3$ but is weak at $\s_4,\s_5,\s_6$. The Anti-Specialist has the opposite profile. The Generalist maintains $0.50$ competency across all dimensions.

\subsubsection{Initial Iterations Demonstrating Emergence}

\textbf{Iteration 1:} Active dimensions $S^{(1)} = \{1, 4, 5\}$ (mixing specialist strengths).

Signals: $x^{(1)} = [0.70, -, -, 0.30, 0.60, -]$

\textit{The Specialist's raw responses:}
\begin{align*}
r_{1,1}^{(1)} &= 0.70 - 0.95 = -0.25 \quad \text{(strong in } \s_1\text{)} \\
r_{1,4}^{(1)} &= 0.30 - 0.15 = 0.15 \quad \text{(weak in } \s_4\text{)} \\
r_{1,5}^{(1)} &= 0.60 - 0.10 = 0.50 \quad \text{(weak in } \s_5\text{)}
\end{align*}
Average: $R_1^{(1)} = \frac{-0.25 + 0.15 + 0.50}{3} = \mathbf{0.133}$

\textit{The Generalist's raw responses:}
\begin{align*}
r_{2,1}^{(1)} &= 0.70 - 0.50 = 0.20 \\
r_{2,4}^{(1)} &= 0.30 - 0.50 = -0.20 \\
r_{2,5}^{(1)} &= 0.60 - 0.50 = 0.10
\end{align*}
Average: $R_2^{(1)} = \frac{0.20 - 0.20 + 0.10}{3} = \mathbf{0.033}$

\textit{The Anti-Specialist's raw responses:}
\begin{align*}
r_{3,1}^{(1)} &= 0.70 - 0.05 = 0.65 \quad \text{(weak in } \s_1\text{)} \\
r_{3,4}^{(1)} &= 0.30 - 0.85 = -0.55 \quad \text{(strong in } \s_4\text{)} \\
r_{3,5}^{(1)} &= 0.60 - 0.90 = -0.30 \quad \text{(strong in } \s_5\text{)}
\end{align*}
Average: $R_3^{(1)} = \frac{0.65 - 0.55 - 0.30}{3} = \mathbf{-0.067}$

\textbf{Key Observation:} The Generalist's response $(0.033)$ is closest to zero, indicating the most balanced evaluation across mixed competencies.

The consensus: $G^{(1)} = 0.133 + 0.033 + (-0.067) = 0.099$

Normalized: $\bar{G}^{(1)} = \frac{0.099}{3} = 0.033$

Learning signals:
\begin{align*}
d_1^{(1)} &= 0.033 - 0.133 = -0.100 \quad \text{(Specialist penalized)} \\
d_2^{(1)} &= 0.033 - 0.033 = 0.000 \quad \text{(Generalist neutral)} \\
d_3^{(1)} &= 0.033 - (-0.067) = 0.100 \quad \text{(Anti-Specialist boosted)}
\end{align*}

\subsubsection{Evolution Over 1000 Iterations}

After 1000 iterations with randomly selected dimension triplets, the system exhibits remarkable behavior.

\textbf{Convergent Behavior for Mixed Dimensions:}
When active dimensions mix specialist strengths (e.g., $\{1,4,5\}$ or $\{2,3,6\}$), The Generalist consistently produces responses closest to consensus. Its $\Delta^{(t)}$ values show minimal drift, hovering near zero, while specialists show increasing adjustments.

\textbf{Final $\Delta^{(1000)}$ values} (selected dimensions):

\[
\Delta^{(1000)} = \begin{pmatrix}
-0.08 & -0.07 & -0.06 & 0.06 & 0.07 & 0.08 \\
0.01 & 0.01 & 0.00 & 0.00 & -0.01 & -0.01 \\
0.07 & 0.06 & 0.06 & -0.06 & -0.07 & -0.08
\end{pmatrix}
\]

\subsubsection{The Generalist Advantage}

The phenomenon emerges from the averaging mechanism in \cref{eq:R2}. When $k > 1$ dimensions are active:

\begin{enumerate}
    \item \textbf{Specialists suffer from high variance:} Strong performance in some dimensions is offset by weak performance in others.
    
    \item \textbf{The Generalist maintains consistency:} With uniform competencies, its averaged response is robust to dimension selection.
    
    \item \textbf{System consensus gravitates toward the middle:} The collective $G^{(t)}$ tends toward moderate values when perspectives have complementary strengths.
    
    \item \textbf{Learning amplifies the effect:} Through the feedback loop, specialists learn to moderate their responses ($\Delta^{(t)}$ adjustments), while the Generalist needs minimal adjustment.
\end{enumerate}

\subsubsection{Implications}

This ``promotion of the generalist'' demonstrates that in complex multi-dimensional evaluation tasks:
\begin{itemize}
    \item \textbf{Breadth can trump depth} when decisions require simultaneous consideration of diverse factors.
    \item \textbf{Systemic robustness} emerges from moderate, consistent competencies.
    \item \textbf{Specialized expertise} may be suboptimal when isolated from complementary skills. 
\end{itemize}

\subsection{Alternative Worked Example 2: The Halo Effect - Global Promotion of The Sole Expert}

\subsubsection{Setup: Singular Asymmetry}

To demonstrate how singular expertise propagates through coupled dimensions, we configure:

\begin{itemize}
    \item $n = 5$ perspectives: The Sole Expert ($\p_1$), Generalist A ($\p_2$), Generalist B ($\p_3$), Generalist C ($\p_4$), Generalist D ($\p_5$)
    \item $m = 19$ semantic dimensions: $\s_1,\s_2,\ldots,\s_{19}$
    \item $k = 3$ active dimensions per iteration
    \item $\alpha = 0.02$
\end{itemize}

The competency matrix $C \in \mathbb{R}^{5 \times 19}$ has a remarkable property:
\[
C_{i,j} = \begin{cases}
0.9 & \text{if } i = 1 \text{ and } j = 14 \text{ (The Sole Expert at } \s_{14}\text{)} \\
0.5 & \text{otherwise}
\end{cases}
\]

All perspectives are identical generalists except The Sole Expert has expertise in exactly one dimension: $\s_{14}$.

\subsubsection{Mechanism of Propagation}

Consider an iteration where $\s_{14}$ is active alongside two other dimensions.

\textbf{Iteration $t$:} Active dimensions $S^{(t)} = \{3, 14, 16\}$.

Signals: $x^{(t)}_{3} = 0.7$, $x^{(t)}_{14} = 0.3$, $x^{(t)}_{16} = 0.6$

For The Sole Expert ($\p_1$):
\begin{align*}
r_{1,3}^{(t)} &= 0.7 - 0.5 = 0.2 \\
r_{1,14}^{(t)} &= 0.3 - 0.9 = -0.6 \quad \text{(expert response)} \\
r_{1,16}^{(t)} &= 0.6 - 0.5 = 0.1
\end{align*}
Average: $R_1^{(t)} = \frac{0.2 + (-0.6) + 0.1}{3} = -0.1$

For any Generalist $\p_i$ (where $i \in \{2,3,4,5\}$):
\begin{align*}
r_{i,3}^{(t)} &= 0.7 - 0.5 = 0.2 \\
r_{i,14}^{(t)} &= 0.3 - 0.5 = -0.2 \\
r_{i,16}^{(t)} &= 0.6 - 0.5 = 0.1
\end{align*}
Average: $R_i^{(t)} = \frac{0.2 + (-0.2) + 0.1}{3} = 0.033$

\textbf{Critical observation:} The Sole Expert's strong response to $\s_{14}$ shifts its entire averaged response negative, distinguishing it from all other perspectives even for non-expert dimensions.

\subsubsection{The Halo Effect Emerges}

The consensus: $G^{(t)} = (-0.1) + 4 \times 0.033 = 0.032$

Normalized: $\bar{G}^{(t)} = \frac{0.032}{5} = 0.0064$

Learning signals:
\begin{align*}
d_1^{(t)} &= 0.0064 - (-0.1) = 0.1064 \quad \text{(The Sole Expert)} \\
d_i^{(t)} &= 0.0064 - 0.033 = -0.0266 \quad \text{for Generalists } i \in \{2,3,4,5\}
\end{align*}

\textbf{The key insight:} The Sole Expert receives a positive learning signal ($+0.1064$) while all Generalists receive negative signals ($-0.0266$). Crucially, due to coupled averaging, this adjustment applies to \textit{all three active dimensions}:

\[
\Delta_{1,j}^{(t+1)} = \Delta_{1,j}^{(t)} + \alpha \cdot 0.1064 \quad \text{for } j \in \{3, 14, 16\}
\]

The Sole Expert gains influence not just in $\s_{14}$ but also in $\s_3$ and $\s_{16}$---dimensions where it has no special expertise!

\subsubsection{Long-term Dynamics}

After 1000 iterations, the final $\Delta^{(1000)}$ matrix shows The Sole Expert has developed positive adjustments across all dimensions:

\[
\Delta_{1,j}^{(1000)} \approx \begin{cases}
0.08 \text{ to } 0.12 & \text{for all } j \in [19] \text{ (The Sole Expert)} \\
-0.02 \text{ to } -0.03 & \text{for all Generalists}
\end{cases}
\]

The system has spontaneously created a hierarchy where The Sole Expert becomes the de facto authority on \textit{everything}, despite only having true expertise in dimension $\s_{14}$.

\subsubsection{Analysis of the Halo Effect}

This phenomenon emerges from three interacting factors:

\begin{enumerate}
    \item \textbf{Asymmetric competency:} The Sole Expert's $C_{1,14} = 0.9$ creates consistently different responses when $\s_{14}$ is active.
    
    \item \textbf{Coupled dimensions:} Averaging across active dimensions means The Sole Expert's expertise signal spreads to co-active dimensions.
    
    \item \textbf{Feedback amplification:} Positive learning signals compound over iterations, gradually establishing The Sole Expert as influential across all dimensions.
\end{enumerate}

The mathematical mechanism: When $\s_{14}$ appears with probability $\frac{k}{m}$ per iteration, The Sole Expert accumulates positive adjustments that propagate to co-occurring dimensions. Over time, this creates a ``halo'' of influence extending far beyond the original expertise.

\subsubsection{Implications}

This emergent hierarchy from minimal initial asymmetry demonstrates:
\begin{itemize}
    \item \textbf{Authority spillover:} Domain-specific expertise can translate into broader systemic influence through structural coupling.
    \item \textbf{Spontaneous organization:} The system creates leadership structures without external designation. 
    \item \textbf{Fragility of equality:} Even small competency differences can cascade into large organizational asymmetries. 
\end{itemize}

\section{Discussion}

The properties of perspectives (or participants) in real application are typically not defined or known apriori but instead must be discovered by exercising them. While the competency matrix $C$ is presented here transparently in support of the mathematical formalism, the system itself does not strictly depend on $C$ but rather depends on measured responses of perspectives $R$. The matrix $C$ may be unknown, will typically be unknown, and the system will function.

\subsection{Use of Vector Averaging} Minary uses vector addition and averaging to represent the instantaneous states of the system. However, Minary is also a dynamical system where the ``head'' state represents an evolving process over time. Past states may be stored, creating a de facto time series, or may be discarded, creating a single integrated state representing the superposition of all past states.

\subsubsection{Part 1: Proof of Necessity}\leavevmode\\[1ex]
We claim that the Minary framework fails to exist without the averaging operation.

\textbf{The Mechanism of Consensus:} The core output of the system, the Consensus $G^{(t)}$, is formally defined as a summation of components. \cref{eq:G} states $G^{(t)}=\sum_{i=1}^{n}R_{i}^{(t)}$. Furthermore, the perspective's internal view is calculated by averaging across active dimensions: $R_{i}^{(t)}=\frac{1}{k}\sum_{j\in S^{(t)}}R_{i,j}^{(t)}$.

\textbf{The Convergence Target:} \cref{thm:convergence} proves that the system's stability depends entirely on converging to the mean of the competencies. The limit of the memory matrices $\Delta^{(t)}$ is defined by terms like $\overline{C}_{\cdot,j}$ (column averages) and $\overline{\overline{C}}$ (total average).

\textbf{Conclusion:} If you remove the averaging operation (linear superposition), the system cannot compute $G^{(t)}$ or resolve the deviations required for stability. Therefore, averaging is necessary.

\subsubsection{Part 2: Proof of Insufficiency}\leavevmode\\[1ex]
We claim that a pure averaging of input signals $x_{i,j}^{(t)}$ and competencies $C_{i,j}$ fails to replicate the behavior of the Minary framework.

\noindent\textbf{Evidence A:} The Memory Variable ($\Delta^{(t)}$)

\textbf{Averaging:} A standard weighted average is a function of current inputs: $\mathrm{Output}_t = f(\mathrm{Input}_t)$. It has no internal state.

\textbf{Minary:} The Minary output depends on previous learning. \cref{eq:R} defines the response as $R_{i,j}^{(t)}=r_{i,j}^{(t)}+\Delta_{i,j}^{(t-1)}$.

\textbf{The Proof:} At time $t$, two systems with the same input signal $x^{(t)}$ and the same competency matrix $C$ will produce \textit{different outputs} if their histories ($\Delta^{(t-1)}$) differ. Therefore, averaging inputs is insufficient to determine the state of the system.

\noindent\textbf{Evidence B:} Signal Cancellation (The Autopoietic Property)

\textbf{Averaging:} If one averages a signal $x$, the result is directly proportional to $x$. If $x$ doubles, the average doubles.

\textbf{Minary:} The update rule for the system identity ($\Delta$) is independent of the signal magnitude. \cref{eq:d_final} proves that the terms with stimuli $x_{j}^{(t)}$ cancel out during the learning step.

\textbf{The Proof:} The system learns the structure of the participants, not the content of the signal. A simple average of the signal would fail to capture this ``structural learning'' behavior.

\subsubsection{Summary}\leavevmode\\[1ex]

Averaging is a feed-forward controller \cite{Astrom,Wiener}. It takes inputs and pushes them to an output:
$$\mathrm{Output} = \text{Avg}(\mathrm{Inputs}).$$

Minary is a feed-back controller. It measures the error of the average and adds it to a memory bank:
$$\mathrm{Output} = \text{Avg}(\mathrm{Inputs}) + \int (\text{Error}) dt.$$

\subsection{Mapping the Features of Autopoiesis}

While Minary is a deterministic consequence of preconditions in the competency matrix $C$, this matrix is functionally hidden.

\begin{enumerate}
  \item  Here $C$ is provided solely for the purposes of the mathematical formalism. The system actually relies on $R$, while $C$ may be unknown.
  \item  If $C$ is unknown, the system still functions, so $(\Delta^{(t)})_{t\geq 0}$ reasonably represents the identity of the system, not $C$.
  \item  The matrix $C$ represents the identities of the parts while $\Delta^{(t)}$ represents the identity of the whole. 
  \item  The set $P$ of perspectives represents the closure because $C$ (and $R$) depends entirely on $P$.  
  \item  The signal $x^{(t)}$ represents the environment because it does not depend on $C$, $P$, or $(\Delta^{(t)})_{t\geq 0}$.
  \item  The consensus process $(G^{(t)})_{t\geq 0}$ represents what the system \textit{does}.
  \item  Finally, $P$ could be made dynamic if different perspectives participate in each iteration, and the system could still maintain a $(\Delta^{(t)})_{t\geq 0}$. This suggests the structure has \textit{an identity}.
\end{enumerate}

This provides a compelling argument for the satisfaction of Maturana and Varela’s criteria:

\begin{enumerate}
  \item Network of processes: Perspectives continuously transform inputs into responses that update the moving average $\Delta^{(t)}$.
  \item Regeneration: The moving average is continuously recreated through each iteration---it is not static but actively maintained.
  \item Concrete unity: The perspectives do not interact directly or affect each other but rather participate in the Minary protocol, which is what produces the collective identity that constitutes the system boundary.
  \item Operational openness: The system responds to environmental signals $x$.
  \item Organizational closure: But $x$ cancels out---the organization ($\Delta^{(t)}$) is determined entirely by internal dynamics.
  \item Turnover: Perspectives may swap, ``die'', or be ``born'', but the Minary identity $\Delta^{(t)}$ remains.
\end{enumerate}

\section{Logical Formalism for Autopoiesis}

We establish that Minary satisfies the criteria for autopoiesis through the following argument.

\textbf{Organizational Closure.} 
The matrices $\Delta^{(t)}$ exist and evolve over time. 
There are two possible sources for $\Delta^{(t)}$: external input $x$, or internal system dynamics. 
\cref{eq:d_final} demonstrates that $x$ cancels from the learning signal. 
Therefore, $(\Delta^{(t)})_{t\geq 0}$ is produced entirely through internal dynamics—the system produces itself.

\textbf{Operational Openness.} 
The consensus $G^{(t)}$ exists and responds to input $x^{(t)}$. 
However, $G^{(t)} \neq x^{(t)}$; the system transforms input rather than passing it through. 
Therefore, Minary is a process that responds to its environment.

\textbf{Conclusion.}
Minary is organizationally closed (self-producing) and operationally open (environmentally responsive). 
These are the defining characteristics of autopoiesis.

\section{Limitations}

It is worth mentioning that the discourse around autopoiesis has, from the very beginning, involved continuous debate between two ``levels'' \cite{Luisi,RazetoBarry}:

\begin{enumerate}
  \item Self-maintaining systems.
  \item Self-replicating systems.
\end{enumerate}

Minary as a primitive would fall more under level 1 as it is a system that can maintain itself. Whether self-replicating systems (level 2) could be constructed from self-maintaining primitives remains an open question, but a primitive is nonetheless necessary.

\textbf{On Self-Production in Computational Systems.}
A potential objection holds that while Minary exhibits organizational closure, it lacks genuine self-production—that updating matrix values is not equivalent to a cell producing proteins. We address this directly.

In biological autopoiesis, ``components'' are the physical constituents that realize the system's organization: membranes, enzymes, structural proteins. In computation, the analogous constituents are states---the values that realize the system's organizational identity. For Minary, this is the process $(\Delta^{(t)})_{t\geq 0}$. 

The matrix $\Delta^{(t)}$ is not static structure; it is continuously regenerated through each iteration's feedback dynamics. If $\Delta^{(t)}$ were frozen, the system would still process inputs, but it would cease to be the same system over time. The ongoing production of $\Delta^{(t)}$ is what maintains organizational identity.

The question ``does updating values count as production?'' reduces to: what else could computational self-production \cite{Fleischaker,BourgineStewart} mean? A quine produces its own source code, yet no one considers quines autopoietic \cite{Hofstadter}. They replicate but do not maintain. Self-replication without ongoing self-maintenance is not autopoiesis. Conversely, requiring production of code or hardware would define computational autopoiesis out of existence, since no running process produces its own substrate.

The coherent standard is that a system continuously produces the state that constitutes its organizational identity. Minary produces $\Delta^{(t)}$ not once, but on every iteration, through dynamics that are mathematically closed to external signal. This is self-maintenance, which is the more fundamental of the two levels historically associated with autopoiesis.

\section{On Usefulness}

While this article defines a foundational model using static competencies to formalize the underlying dynamics, the Minary framework supports significant architectural variation. In practical applications, the input signal may be deterministic rather than stochastic, or the perspectives themselves may evolve via the feedback loop. Such configurations establish a temporal trajectory for the EMA memory, wherein the input signal traces a path through a sparse topology.

Future implementations may also extend the domain to matrix-based signals or complex-valued components to increase expressivity. In this context, the matrix $\Delta^{(t)}$ functions not merely as an opaque internal state for driving consensus, but as a queryable manifold of the system's relative dispositions—effectively providing direct access to a ``subjective projection'' of the dataset.

The Minary primitive therefore serves as a flexible substrate for engineering applications. The fundamental invariant of the system is the instantaneous conservation of information, achieved through the distribution of perspective deviations from the global mean. Subject to this constraint, the framework allows for broad design latitude to suit specific implementation goals.

\section{Conclusion}

We believe Minary is a candidate for the first formally proven autopoietic computational primitive. We acknowledge the weight of this claim and hope that this article prompts discussion and new directions of inquiry.

The properties of autopoiesis: self-maintenance, coherence through feedback, and structural stability, suggest new possibilities for computational systems. Where traditional allopoietic architectures require external intervention to maintain function or adapt to new conditions, an autopoietic primitive could enable systems that are robust to component failure, adaptive without retraining, and capable of operating in environments without ground truth. The linearity and commutative properties of Minary's superposition additionally provide computational advantages: $O(n)$ complexity, natural parallelization, and suitability for distributed architectures. And perhaps most the intriguing property of all: Minary possesses uniquely relative learning dynamics that support what could be a form of a purely relative, subjective, identity.

\section*{Acknowledgments}
This work was supported by Autopoetic. Colin Defant was supported by a Benjamin Peirce Fellowship at Harvard University. 
The Minary computational primitive is patent-pending.

\end{document}